\newtheorem{lemma}{LEMMA}[section]
\newtheorem{proposition}[lemma]{PROPOSITION}
\newtheorem{corollary}[lemma]{COROLLARY}
\newtheorem{theorem}[lemma]{THEOREM}
\newtheorem{remark}[lemma]{REMARK}
\newenvironment{knownresult}[1][THEOREM]{\begin{trivlist}
\item[\hskip \labelsep {\bfseries #1.}] \itshape}    {\end{trivlist}}
\newcommand{\real}{\mathbbm{R}}
\newcommand{\nat}{\mathbbm{N}}
\renewcommand{\a}{\alpha}
\renewcommand{\b}{\beta}
\newcommand{\g}{\gamma}
\newcommand{\vp}{\varphi}
\newcommand{\ve}{\varepsilon}
\newcommand{\reald}{{\real^d}}
\newcommand{\on}{\quad\text{ on }}
\newcommand{\und}{\quad\mbox{ and }\quad}
\newcommand{\inv}{^{-1}}
\newcommand{\ov}{\overline}
\newcommand{\dist}{\mbox{\rm dist}}
\newcommand{\itemframe}%
{\setlength{\parskip}{10pt}\begin{enumerate} \setlength{\topsep}{10pt}%
\setlength{\itemsep}{15pt}\setlength{\parsep}{5pt}}
\title{Champagne subdomains with unavoidable bubbles}
\author{WOLFHARD HANSEN and IVAN NETUKA
\thanks{Both authors gratefully acknowledge support
by CRC-701, Bielefeld.}}
\date{}
\begin{document}
\maketitle 

\begin{abstract} 
A champagne subdomain of a connected open  set $U\ne\emptyset$ in $\mathbbm R^d$, $d\ge 2$, is obtained
omitting  pairwise disjoint closed balls $\ov B(x, r_x)$, $x\in X$, the bubbles, 
where $X$ is an infinite, locally finite set in $U$.
The union $A$ of  these balls may be unavoidable, that is, Brownian motion,  starting in $U\setminus A$
and killed when leaving~$U$,  may hit~$A$ almost surely or, equivalently, 
$A$ may have harmonic measure one for $U\setminus A$. 
 
Recent publications by  Gardiner/Ghergu ($d\ge 3$) and by Pres ($d=2$) give rather sharp 
answers to the question how small such a set $A$ may be, when $U$~is the unit ball.

In this paper, using a totally different approach,  optimal  results are obtained, results which hold as well
for  arbitrary connected open sets~$U$.

\bigskip

 {
 Keywords: Harmonic measure; Brownian motion; capacity; champagne subregion;
champagne subdomain; unavoidable bubbles

 MSC: 
 31A15; 31B15;    60J65 }
\end{abstract}

\section{Introduction and main results}

Throughout this paper let $U$ denote a non-empty connected  open  set in $\reald$, $d\ge 2$. 
Let us say that a  relatively closed subset  $A$ of $U$  is  \emph{unavoidable},
if~Brownian motion, starting in $U\setminus A$ and killed when leaving $U$,  hits $A$ almost surely or, equivalently,
if~$\mu_y^{U\setminus A}(A)=1$,  for every $y\in U\setminus A$, where $\mu_y^{U\setminus A}$ 
denotes the harmonic measure at~$y$  with respect to~$U\setminus A$ (we note that $\mu_y^{U\setminus A}$ may fail to be a probability measure,
if~$U\setminus A$ is not bounded).

For $x\in \reald$ and $r>0$, let $B(x,r)$ denote the open ball of center~$x$  and radius~$r$. 
Suppose that $X$ is a countable set in~$U$ having no accumulation point in $U$, and let $r_x>0$, $x\in X$, such that  
the closed balls $\ov B(x,r_x)$,  the \emph{bubbles}, are pairwise disjoint, $\sup_{x\in X} r_x/\dist(x,\partial U)<1$ and,
if $U$ is unbounded, $r_x\to 0$ as $x\to \infty$.   
Then the union $A$ of all $\ov B(x,r_x)$ is relatively closed in $U$, and the connected open set $U\setminus A$ 
(which is non-empty!) is  called a~\emph{champagne subdomain of~$U$}.   

This generalizes the notions used in~\cite{akeroyd, gardiner-ghergu, odonovan, ortega-seip,pres}
for $U=B(0,1)$; see also \cite{carroll} for the case, where $U$ is $\reald$, $d\ge 3$.   
Avoidable unions of  randomly distributed balls have been discussed in \cite{lundh-percolation} and, recently,
in \cite{cardoce}.

It will be convenient  to introduce the set $X_A$ for a champagne subdomain $U\setminus A$: 
$X_A$ is the set of centers of all 
the  bubbles forming $A$ (and $r_x$, $x\in X_A$, is the radius of the bubble centered at $x$).
It is fairly easy to see that, given a champagne subdomain $U\setminus A$ and a finite subset 
$X'$ of $X_A$, the set $A$ is unavoidable if and only if the union of all bubbles $\ov B(x,r_x)$,
$x\in X_A\setminus X'$, is unavoidable.

The main result of Akeroyd \cite{akeroyd} is, for a given $\delta>0$,  the existence of  a champagne subdomain of the unit disc
such that 
\begin{equation}\label{aker}
\mbox{$\sum\nolimits_{x\in X_A}\  r_x<\delta$ and yet  $A$ is unavoidable.}
\end{equation} 
Ortega-Cerd\`a and Seip \cite{ortega-seip} improved
the result of Akeroyd in characterizing a certain class of champagne subdomains~$B(0,1)\setminus A$, 
where $A$ is unavoidable and \hbox{$\sum_{x\in X_A} r_x<\infty$},  and hence the statement of (\ref{aker}) 
can be obtained omitting finitely many of the discs $\ov B(x,r_x)$, $x\in X_A$.

Let us note that already in \cite{HN-sigma} the existence of a champagne subdomain  of an arbitrary 
bounded connected open set   $U$ in $\real^2$ having property (\ref{aker})  was crucial for the construction of an example
answering Littlewood's one circle problem to the negative. 
In fact, Proposition 3 in \cite{HN-sigma} is a bit stronger:
Even a Markov chain formed by jumps on annuli hits $A$ before it goes to $\partial U$. The statement about
harmonic measure (hitting by Brownian motion) is obtained by the first part of the proof of Proposition 3 in \cite{HN-sigma}
(cf.~also \cite{H-kouty}, where this is explicitly stated at the top of page~72). 
This part  uses only  ``one-bubble-estimates''  for the global Green function and the minimum principle.

Recently, Gardiner/Ghergu \cite[Corollary 3]{gardiner-ghergu} proved the following. 

\begin{knownresult}[THEOREM A]  If $d\ge 3$, then, for all $\a>d-2$  and $\delta>0$,
there is a~champagne subdomain $B(0,1)\setminus A$   such that $A$ is unavoidable 
and 
$$ 
\sum\nolimits_{x\in X_A}\  r_x^\a <\delta.
$$ 
\end{knownresult}

Moreover, Pres \cite[Corollary 1.3]{pres} showed  the following for the plane.

\begin{knownresult}[THEOREM B]
If $d=2$, then, for all $\a>1$ and $\delta>0$, there is a~champagne subdomain $B(0,1)\setminus A$ 
 such that $A$ is unavoidable and 
$$
\sum\nolimits_{x\in X_A} \ \bigl(\log\frac 1{r_x}\bigr)^{-\a}   <\delta.
$$ 
\end{knownresult}

Due to capacity reasons both results are sharp in the sense that $\a$ cannot be replaced by $d-2$ in Theorem A      
and $\a$ cannot be replaced by $1$ in Theorem B.  
In~fact, taking $\a=d-2$, $\a=1$, respectively, the corresponding series  diverge, if~$A$~is an unavoidable
set of bubbles (see \cite[p.\ 323]{gardiner-ghergu}  and \cite[Remark 1.4]{pres}). 
The proofs of Theorems A and B are quite involved and, in addition, use the delicate results
\hbox{\cite[Theorem 1]{essen}}  (cf.\ \hbox{\cite[Corollary 7.4.4]{aikawa-essen})} on minimal thinness of subsets~$A$ of~$B(0,1)$ at  points 
$z\in\partial  B(0,1)$
and \hbox{\cite[Proposition 4.1.1]{aikawa-borichev}} on quasi-additivity of capacity. 

Carefully choosing bubbles centered at concentric spheres, estimating related potentials, and using the minimum principle,
we obtain the following optimal result, not only for the unit ball, but even for arbitrary connected open sets.

\begin{theorem} \label{main23}
Let $U\ne \emptyset $ be a connected open  set  in $\reald$, $d\ge 2$, and let 
\hbox{$h:(0,1)\to (0,1)$}  be such that $\lim_{t\to 0} h(t)=0$. 
  Then, for every $\delta>0$,
there is a~champagne subdomain $U\setminus A$        such that $A$ is unavoidable and     
\begin{eqnarray*} 
\sum\nolimits_{x\in X_A} \ \bigl(\log \frac 1{r_x}\bigr)\inv  h(r_x)  &<&\delta, \qquad  \mbox{ if } d=2,\\[1mm]
\sum\nolimits_{x\in X_A} \ r_x^{d-2} h(r_x) &<&\delta, \qquad \mbox{ if } d\ge 3. 
\end{eqnarray*} 
   \end{theorem}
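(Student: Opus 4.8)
The plan is to build $A$ as a union of ``shells'' of small bubbles arranged on a sequence of concentric spheres, so that crossing each shell is itself almost surely unavoidable, and then to chain infinitely many such shells together via the minimum principle so that Brownian motion, however it moves inside $U$, is forced to meet one of them before reaching $\partial U$. First I would fix a point $x_0\in U$ and a small closed ball $\ov B(x_0,\rho)\subset U$, and work inside a slightly larger ball $B(x_0,2\rho)\subset U$; it suffices to make $A\cap B(x_0,2\rho)$ unavoidable for $B(x_0,2\rho)$, because once Brownian motion started in $U\setminus A$ is conditioned to enter $B(x_0,\rho)$ (an event of positive probability from any starting point, by connectedness and the Harnack chain argument), it will hit $A$ there a.s.; and a relatively closed set that is unavoidable for Brownian motion killed on leaving $B(x_0,2\rho)$ is, a fortiori, hit before leaving $U$. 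Actually a cleaner route is to place, for each $n$, a shell $S_n$ of bubbles covering a sphere $\partial B(x_0,\rho_n)$ with $\rho_n\downarrow 0$, chosen so small that the key ``one-bubble'' Green-function/capacity estimate forces a Brownian path passing through the annulus between $\partial B(x_0,2\rho_{n})$ and $\partial B(x_0,\rho_{n}/2)$ to hit $S_n$ with probability at least $1-2^{-n}$, and then use Borel--Cantelli together with the fact that a path not converging to $x_0$ must exit all but finitely many of these annuli.

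The construction of a single shell is the heart of the matter. On a sphere of radius $\rho_n$ I would lay down a maximal $\varepsilon_n$-separated set of centers, with $\varepsilon_n$ comparable to $\rho_n r_n/\dist$-type scale, and attach to each a bubble of the common radius $r_n$; the number of bubbles is then of order $(\rho_n/\varepsilon_n)^{d-1}$ when $d\ge 3$ (and order $\rho_n/\varepsilon_n$ when $d=2$). The unavoidability of crossing the shell reduces, by the minimum principle applied to the reduced function (equilibrium potential) of the shell on the annular region, to showing that the harmonic measure of the shell, evaluated on the inner and outer boundary spheres, is close to $1$; and this in turn follows from a lower bound on the capacity of the shell relative to the annulus. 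The capacity of a union of well-separated small balls is, up to constants, the sum of the individual capacities: of order $N_n r_n^{d-2}$ for $d\ge 3$ and of order $N_n/\log(1/r_n)$ for $d=2$, where $N_n$ is the number of bubbles. So I would choose $N_n$ just large enough (i.e. $\varepsilon_n$ just small enough) that this capacity sum dominates the capacity of the separating annulus (which is bounded below by a constant depending only on the ratio of radii, hence uniform in $n$), with a safety factor $2^{n}$ to make the miss-probabilities summable.

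It remains to check the two competing constraints against the weight $h$. For $d\ge3$ the shell contributes $N_n r_n^{d-2}h(r_n)$ to the series in Theorem~\ref{main23}; since the unavoidability of the shell only needs $N_n r_n^{d-2}\gtrsim 2^n$, i.e.\ $N_n r_n^{d-2}$ of order $2^n$ suffices, the contribution is of order $2^n h(r_n)$, and because $h(t)\to 0$ as $t\to 0$ I am free to pick $r_n$ so tiny (after $N_n$, equivalently $\varepsilon_n$, is fixed — note the separation condition $\varepsilon_n\ge 2r_n$, i.e.\ roughly $N_n\le c\,\rho_n^{d-1}r_n^{-(d-1)}$, is easily met by then shrinking $r_n$ further and $\rho_n$ if necessary) that $2^n h(r_n)<\delta 2^{-n}$, whence the total is $<\delta$. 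The case $d=2$ is identical with $r_x^{d-2}$ replaced by $(\log 1/r_x)^{-1}$ throughout, since the logarithmic capacity of a small disc is comparably $(\log 1/r_x)^{-1}$. The main obstacle I anticipate is the rigorous passage from ``capacity of the shell is large'' to ``harmonic measure of the shell is nearly one on both bounding spheres,'' uniformly across the infinitely many scales $\rho_n$; this is where a clean one-bubble Green-function estimate, the scaling invariance of Brownian motion, and the minimum principle (exactly the ingredients the paper advertises) must be combined carefully — in particular one must ensure the estimate does not degrade as $\rho_n\to0$, which is why all the geometric ratios $r_n/\rho_n$, $\varepsilon_n/\rho_n$ should be kept bounded below along the sequence, with only $\rho_n$ and $r_n$ themselves tending to $0$.
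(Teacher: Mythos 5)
Your single-shell construction and the machinery you invoke for it (comparison of the sum of the bubbles' equilibrium potentials with that of a sphere, lower bounds on the capacity of a union of well-separated balls, the minimum principle, and keeping the geometric ratios scale-invariant) are essentially the ingredients of the paper's Sections 3 and 5, and your final bookkeeping with $h$ (choose $r_n$ after the count $N_n$ is fixed, so that $h(r_n)$ beats it) is also the paper's. But the global architecture is wrong: you place your shells on spheres $\partial B(x_0,\rho_n)$ with $\rho_n\downarrow 0$ shrinking to an \emph{interior} point $x_0$. First, this does not even produce a champagne subdomain, since the centers then accumulate at $x_0\in U$, whereas the definition requires $X_A$ to have no accumulation point in $U$. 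Second, and fatally, such an $A$ is contained in a compact subset $K=\ov B(x_0,2\rho_1)$ of $U$ and therefore can never be unavoidable: for $y$ near a regular point of $\partial U$ outside $K$ one has $H_{U\setminus A}1_A(y)\le H_{U\setminus K}1_{K}(y)<1$. Your reduction (``it suffices that $A$ be unavoidable for $B(x_0,2\rho)$, since entering $B(x_0,\rho)$ has positive probability from anywhere'') only yields $H_{U\setminus A}1_A>0$, not $=1$; a positive hitting probability from every starting point does not give unavoidability unless the lower bound is uniform over all of $U\setminus A$, and your bound degenerates as the starting point approaches $\partial U$. Likewise the assertion that ``a path not converging to $x_0$ must exit all but finitely many of these annuli'' is false: a Brownian path can exit $U$ without ever entering $B(x_0,\rho_1)$, hence without meeting any of your shells.

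The fix is to reverse the direction of the shells: they must accumulate at $\partial U$, not at an interior point, so that each separating surface eventually lies between any fixed starting point and $\partial U$ and the path is forced to cross all but finitely many of them before leaving $U$. This is exactly what the paper does: shells on $\partial B(0,R_k)$ with $R_k\uparrow 1$ for the unit ball, and, for a general connected open $U$, small clusters of bubbles placed along the boundaries $\partial V_n$ of an exhaustion $V_n\uparrow U$. Note also that you do not need each shell to be hit with probability $1-2^{-n}$ (which would force the shell capacities to grow); a \emph{uniform} constant $\kappa>0$ per shell suffices, since by the chaining argument of Proposition~\ref{nlogn} divergence of $\sum_j\kappa_j$ already gives unavoidability. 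With those corrections your shell analysis and your choice of the $r_n$ would go through.
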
 

Moreover, we may treat the cases $d=2$ and $d\ge 3$ simultaneously.
To that end we define functions 
\begin{equation*} 
 N(t) :=\begin{cases} \log \frac 1t  ,&\quad\mbox{ if } d=2,\\
                                t^{2-d} ,&\quad\mbox{ if }d\ge 3,
                   \end{cases}                   \und \vp(t):= 1/N(t)
\end{equation*} 
so that  $(x,y) \mapsto N(|x-y|)$   is the global Green function  and, for $d\ge 3$, $\vp(t)=t^{d-2}$ is the capacity of  balls
with radius $t$ (for $d=2$, $\vp(t)$  should  be considered for $t\in (0,1)$ only). Using the (capacity) function $\vp$
our Theorem \ref{main23} adopts the following form.

\begin{theorem} \label{main}
Let $U\ne \emptyset $ be a connected open  set  in $\reald$, $d\ge 2$, and let \hbox{$h:(0,1)\to (0,1)$} be such that $\lim_{t\to 0} h(t)=0$. 
 Then, for every $\delta>0$,
there is a~champagne subdomain $U\setminus A$  
such that $A$  is unavoidable and 
\begin{equation}\label{sharp}
\sum\nolimits_{x\in X_A}  \vp(r_x) h(r_x)  < \delta.
\end{equation} 
 \end{theorem}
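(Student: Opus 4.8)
The plan is to build the unavoidable set $A$ out of thin shells of tiny balls placed on a sequence of concentric spheres that march outward (or inward) through $U$, so that Brownian motion cannot cross any one shell without a definite probability of being captured, and then to arrange that the capture probabilities at successive shells do not summably decay — whence by Borel–Cantelli the particle is caught almost surely before it can reach $\partial U$. Concretely, I would first reduce to a convenient local situation: since omitting finitely many bubbles does not change unavoidability (as noted after the definition of $X_A$), and since $U$ is connected and open, it suffices to produce the configuration inside a fixed ball $\ov B(x_0,\rho)\subset U$, using spheres $S_k=\partial B(x_0,\rho_k)$ with $\rho_k\downarrow 0$; Brownian motion started anywhere in $U\setminus A$ must, before leaving $U$, either wander off (impossible if it is to avoid $A$, which will surround a neighborhood of $x_0$ infinitely often) or pass through all the shells $S_k$, $k\ge K$, for some $K$. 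The key mechanism is a one-ball / one-sphere estimate for harmonic measure: if on a sphere of radius $\rho_k$ we place $n_k$ balls of radius $r_k$, roughly equally spaced (angular separation $\sim \rho_k/n_k^{1/(d-1)}$, say), then the harmonic measure of the union of these balls, seen from a point just inside the sphere, is bounded below by a quantity comparable to $n_k\,\vp(r_k)/\vp(\rho_k/n_k^{1/(d-1)})$ — this is exactly the "one-bubble estimate for the global Green function plus the minimum principle" alluded to in the introduction, and it is the heart of the matter.

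The main steps, in order, are: (i) fix $x_0\in U$ and $\rho>0$ with $\ov B(x_0,\rho)\subset U$; (ii) choose radii $\rho_k\downarrow 0$ and, on each sphere $\partial B(x_0,\rho_k)$, a maximal $\eta_k$-separated set of centers with $\eta_k$ to be chosen, giving $n_k\asymp (\rho_k/\eta_k)^{d-1}$ centers; (iii) pick the bubble radius $r_k\ll \eta_k$ (so the bubbles on a shell are disjoint) and $r_k$ small enough that the total weight contributed by shell $k$, namely $n_k\,\vp(r_k)h(r_k)\asymp (\rho_k/\eta_k)^{d-1}\vp(r_k)h(r_k)$, is $<\delta 2^{-k}$ — this uses crucially that $\vp(r)\to 0$ and $h(r)\to 0$ as $r\to 0$, so for each fixed $\eta_k,\rho_k$ we can always shrink $r_k$ to kill the sum while keeping the capacity ratio under control; (iv) estimate the probability $p_k$ that Brownian motion, started just inside $\partial B(x_0,\rho_k)$ and run until it exits the annulus between $\partial B(x_0,\rho_{k+1})$ and $\partial B(x_0,\rho_{k-1})$ (or leaves $U$), hits the $k$-th shell of bubbles: using the one-ball Green-function estimate and the minimum principle as in the cited $HN$-$\sigma$ argument, $p_k\ge c\, n_k\vp(r_k)/\vp(\eta_k)$ after accounting for the geometric decay factor from moving between shells; (v) choose the parameters so that $\sum_k p_k=\infty$ while still $\sum_k n_k\vp(r_k)h(r_k)<\delta$; (vi) conclude via a Borel–Cantelli / minimum-principle argument that $\mu_y^{U\setminus A}(A)=1$ for all $y$. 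For concreteness one can take $\rho_k=2^{-k}\rho$ and $\eta_k=\rho_k/k$ (so $n_k\asymp k^{d-1}$), whence one needs $\sum_k c\,k^{d-1}\vp(r_k)/\vp(\rho_k/k)=\infty$ and $\sum_k k^{d-1}\vp(r_k)h(r_k)<\delta$; since $\vp(\rho_k/k)\to0$, and $h$ is the only constraint forcing $r_k$ down, both can be met by choosing $r_k$ so that $k^{d-1}\vp(r_k)\asymp 1/k$ say, then checking $k^{d-1}\vp(r_k)h(r_k)\asymp h(r_k)/k$ is summable because $h(r_k)\to0$.

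The step I expect to be the main obstacle is (iv)–(v): getting a clean, honest lower bound on the per-shell hitting probability $p_k$ that (a) is uniform in the starting point on the inner side of the shell, (b) correctly incorporates the loss incurred because the particle must first travel from $\partial B(x_0,\rho_{k-1})$ down to the shell $\partial B(x_0,\rho_k)$ — this introduces a factor like $\vp(\rho_k)/\vp(\rho_{k-1})$ which is bounded below by a constant if $\rho_{k-1}/\rho_k$ is bounded, so the geometric choice $\rho_k=2^{-k}\rho$ is important — and (c) survives the comparison between the harmonic measure of a cloud of small balls on a sphere and the sum of the individual capacities, which is where the minimum principle and a careful choice of $\eta_k$ relative to $r_k$ (ensuring the balls are "spread out" enough that their potentials essentially add, yet dense enough that the cloud genuinely blocks the sphere) must be balanced. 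Once that estimate is in hand with explicit constants, the bookkeeping in (iii) and (v) is routine: shrink each $r_k$ as needed, which only helps the convergence of $\sum n_k\vp(r_k)h(r_k)$ and, with the scaling chosen so that $n_k\vp(r_k)$ stays comparable to $1/k$ (non-summable) while $n_k\vp(r_k)h(r_k)\le \delta 2^{-k}$ (summable, since $h(r_k)\to0$), delivers exactly (\ref{sharp}) together with unavoidability.
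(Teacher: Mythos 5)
There is a decisive gap at the very first step: your shells collapse \emph{inward} to an interior point $x_0$, with all bubbles contained in a fixed compact ball $\ov B(x_0,\rho)\subset U$ and centers accumulating at $x_0$. This fails twice over. First, the centers then have an accumulation point inside $U$, so $U\setminus A$ is not a champagne subdomain in the sense required (the set of centers must be locally finite in $U$), and $A$ is not even relatively closed in $U$. Second, and more fundamentally, a set contained in a compact subset $K$ of $U$ can never be unavoidable when $U$ is, say, bounded: $H_{U\setminus A}1_A\le H_{U\setminus K}1_K$, which is a potential on $U$ and hence strictly less than $1$ (Brownian motion started near $\partial U$ away from $K$ exits $U$ without ever visiting $K$ with positive probability). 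Your justification --- that the particle ``must pass through all the shells $S_k$'' because they ``surround a neighborhood of $x_0$'' --- is backwards: the shells separate $x_0$ from $\partial U$, so they obstruct paths travelling \emph{toward} $x_0$, not paths escaping to $\partial U$. The remark about omitting finitely many bubbles does not license localizing the construction to a compact piece of $U$; it only says unavoidability is insensitive to finitely many bubbles. The correct geometry is the opposite one: the shells must accumulate at $\partial U$. For the unit ball one takes spheres $\partial B(0,R_k)$ with $R_k\uparrow 1$; for a general connected $U$ (where there is no family of concentric spheres exhausting it) one takes an exhaustion $V_n\uparrow U$, covers each $\partial V_n$ by finitely many small balls, and plants inside each a scaled copy of the spherical-shell cluster, so that any path crossing $\partial V_n$ is captured with probability bounded below before reaching $\partial V_{n+1}$.

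A secondary issue, which would surface even after fixing the geometry, is your step (iv)--(v). The per-shell capture probability is not usefully of the form $c\,n_k\vp(r_k)/\vp(\eta_k)$ with $n_k\vp(r_k)\asymp 1/k$; with your parameters this quantity is not even bounded by $1$. The right normalization is to tie the total capacity of a shell to the gap $\rho$ to the next shell, namely $n\,\vp(r)\asymp \rho^{-1}$ with spacing $\b=(\vp(r)\rho)^{1/(d-1)}$; one then compares the sum of the point potentials $G\mu$ with the equilibrium potential $G\sigma$ of the whole sphere to get a capture probability bounded below by a \emph{universal} constant $\kappa>0$, independent of $r$. Because $\kappa$ does not depend on $r$, one can afterwards shrink $r$ freely to make $n\,\vp(r)h(r)\asymp \rho^{-1}h(r)$ as small as desired, which is exactly where the hypothesis $h(t)\to 0$ enters; divergence of $\sum_k\kappa$ is then automatic and the product $\prod(1-\kappa)$ vanishes. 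Your outline has the right flavour (capacity bookkeeping versus non-summable capture probabilities), but without the constant-per-shell estimate and with the inverted geometry the argument does not go through.
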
 
       
Accordingly,  the results by Gardiner/Ghergu and Pres (Theorems A and B) can be unified as follows.    

\begin{knownresult}[THEOREM C] If $d\ge 2$, then, for all $\ve >0$ and $\delta>0$, there is a~champagne subdomain $B(0,1)\setminus A$ 
such that $A$ is unavoidable and 
$$
\sum\nolimits_{x\in X_A} \ \vp(r_x)^{1+\ve}     < \delta.
$$
\end{knownresult}

Clearly, Theorem C follows from Theorem \ref{main} taking $h=\vp^\ve$. Of course, we may get much stronger statements
taking, for example,  
$$
h(t) =(\log \log \dots \log (1/\vp(t)))\inv, \qquad \mbox{ $t>0$ sufficiently small.}
$$

In fact, we shall obtain the following result for the open unit ball.

\begin{theorem}\label{dream-unit}
Let $d\ge 2$, $\delta>0$,  and \hbox{$h:(0,1)\to (0,1)$}  with \hbox{$\lim_{t\to 0} h(t)=0$}. Further, 
let $(R_k)$ be a sequence in $(1/2,1)$ which is strictly increasing to $1$. 

Then there exist finite sets $X_k$ in $\partial B(0,R_k)$
and $0<r_k<(1-R_k)/6$ such that, taking 
$$
 A:=\bigcup\nolimits_{x\in X_k, k\in\nat}  \ov B(x,r_k),
$$
the set $B(0,1)\setminus A$ is a~champagne subdomain,
$A$ is unavoidable and  {\rm (\ref{sharp})} holds.
\end{theorem}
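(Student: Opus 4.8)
The plan is to build the bubbles shell by shell, on the spheres $\partial B(0,R_k)$, and to arrange that at each shell the Brownian motion which has not yet been absorbed is forced, with probability bounded below by a fixed constant, to hit a bubble before it can pass to the next shell. Concretely, fix the increasing sequence $R_k\uparrow 1$ and set $a_k:=(1-R_k)/6$, so $a_k\to 0$. On the sphere $\partial B(0,R_k)$ distribute $m_k$ points $X_k$ roughly equidistantly, and center at each a closed ball of common radius $r_k\in(0,a_k)$. The first thing to do is to record the one-bubble estimate: for a single ball $\ov B(x,r)$ with $r$ small compared to $\dist(x,\partial U)$, the harmonic measure (for the complement of the ball, inside a slightly larger region) of that ball, evaluated at a point $y$ at distance $\asymp$ a fixed multiple of $\dist(x,\partial U)$ from $x$, is comparable to $\vp(r)/\vp(\dist(x,\partial U))$ in $d\ge3$, and to $N(\dist(x,\partial U))^{-1}/N(r)^{-1}\cdot(\dots)$, i.e.\ $\asymp \bigl(\log\frac1{r}\bigr)^{-1}$-type, in $d=2$; this is exactly the ``one-bubble estimate for the global Green function'' referred to in the discussion of \cite{HN-sigma}. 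Summing the potentials of the $m_k$ bubbles on the $k$-th shell and invoking the minimum principle, one gets: if $m_k\,\vp(r_k)$ is large (say $\ge C_k$ for a suitable $C_k\to\infty$ slowly), then the union $A_k$ of the $k$-th shell's bubbles, together with everything outside $B(0,R_k+\text{small})$, has harmonic measure $\ge \tfrac12$ at every point of $\partial B(0,R_{k-1})$, uniformly.

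With that in hand, the second step is the Borel--Cantelli / product-estimate argument on the successive shells. Let $T_k$ be the first time the Brownian motion reaches the sphere $\partial B(0,R_k)$ (more precisely a sphere just inside it) after having reached $\partial B(0,R_{k-1})$. Between consecutive shells the process either hits $A_k$ or crosses to $\partial B(0,R_k)$; the shell estimate of Step 1 gives that, conditionally on reaching $\partial B(0,R_{k-1})$ without having been absorbed, the probability of being absorbed on $A_k$ before reaching $\partial B(0,R_k)$ is $\ge p$ for a fixed $p>0$ (one may need $p_k$ with $\sum p_k=\infty$; taking the $C_k$ growing ensures $p_k\to 1$, more than enough). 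Hence the probability of surviving all shells $1,\dots,K$ is $\le\prod_{k\le K}(1-p_k)\to 0$, so with probability one the motion started anywhere in $B(0,1)\setminus A$ hits $A$ before exiting $B(0,1)$; equivalently $A$ is unavoidable. One must also check the trivial structural requirements: that the $\ov B(x,r_k)$, $x\in X_k$, $k\in\nat$, are pairwise disjoint (spheres $\partial B(0,R_k)$ are $\ge 6a_k$ apart from the boundary and from each other by choosing the $R_k$ increasing fast enough, and within a shell one spaces the $m_k$ points so the balls of radius $r_k\le a_k$ don't overlap), that $\sup r_x/\dist(x,\partial U)<1$ (clear since $r_k< a_k=(1-R_k)/6\le\dist(x,\partial U)/6$), and that $X_A:=\bigcup_k X_k$ has no accumulation point in $B(0,1)$ (each shell is finite and the shells march out to $\partial B(0,1)$). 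Since $r_k\to0$ and the only accumulation is at the boundary, $B(0,1)\setminus A$ is indeed a champagne subdomain.

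The third and final step is the summability estimate \eqref{sharp}. The $k$-th shell contributes $m_k\,\vp(r_k)\,h(r_k)$ to the sum $\sum_{x\in X_A}\vp(r_x)h(r_x)$. From Step 1 we only need $m_k\,\vp(r_k)\ge C_k$; the point is that we get to choose $r_k$ as small as we like \emph{after} fixing the shell geometry, and then choose $m_k$ accordingly (the number of disjoint $r_k$-balls that fit on $\partial B(0,R_k)$ grows like $r_k^{-(d-1)}$, which dwarfs the $r_k^{-\,O(1)}$ needed to make $m_k\vp(r_k)$ large; so there is enormous slack). Having fixed $m_k$ and the required lower bound on $m_k\vp(r_k)$, we then shrink $r_k$ further still so that $h(r_k)$ is tiny — using $\lim_{t\to0}h(t)=0$ — arranging $m_k\,\vp(r_k)\,h(r_k)\le \delta\,2^{-k}$ while keeping $m_k\,\vp(r_k)\ge C_k$; this is possible because we may let $m_k$ grow together with the shrinking of $r_k$. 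Summing over $k$ gives $\sum_{x\in X_A}\vp(r_x)h(r_x)\le\sum_k\delta 2^{-k}=\delta$, as required. The main obstacle, and the place where care is genuinely needed, is Step 1: making the ``sum of one-bubble potentials on a shell'' argument give a lower bound on harmonic measure that is \emph{uniform} over the inner sphere $\partial B(0,R_{k-1})$ and robust to the presence of all the other (outer and inner) shells — this is precisely the kind of Green-function-plus-minimum-principle estimate the introduction advertises, and getting the constants to line up so that $p_k$ can be pushed toward $1$ while $r_k\to0$ freely is the crux. The inter-shell Markov bookkeeping in Step 2 and the summability juggling in Step 3 are then routine.
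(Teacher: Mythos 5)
Your overall strategy coincides with the paper's: bubbles placed on the spheres $\partial B(0,R_k)$, a per-shell hitting estimate, a product/Borel--Cantelli argument across shells (this is exactly Proposition \ref{nlogn}), and the observation that $h(r_k)\to 0$ lets you kill the capacity sum shell by shell. The structural checks (disjointness, $r_k<(1-R_k)/6$, local finiteness) and the Step 3 bookkeeping are fine. But there is a genuine gap: your Step 1, which you yourself flag as ``the crux,'' is asserted rather than proved, and it is the entire mathematical content of the theorem. The claim that ``$m_k\,\vp(r_k)$ large'' forces the shell-hitting probability to be $\ge 1/2$ (let alone $p_k\to 1$) does not follow from summing one-bubble estimates. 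Two things are missing. First, a lower bound on the sum of the bubble potentials needs the points to be spread so regularly that $G\mu$ dominates a fixed multiple of the equilibrium potential $G\sigma$ of the whole sphere on the region where you start; this is Lemma \ref{Gsigmamu} of the paper and uses a Harnack-type comparison of Green functions (Lemma \ref{zzy}) together with a partition of $\partial B(0,R)$ into cells of diameter $\asymp\b$ with $\vp(r)=\b^{d-1}\rho\inv$. Second --- and this is the step your sketch never mentions --- to convert $G\mu$ into a lower bound for $H_{V\setminus E_r}1_{E_r}$ via the minimum principle you must bound $G\mu$ \emph{from above on the bubbles themselves}, i.e.\ control the contribution of all the other bubbles near a given one (the second inequality of Lemma \ref{Gsigmamu} plus (\ref{essential})). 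Without that upper bound the normalization $G\mu/\sup_{\partial E_r}G\mu$ is not available and no positive $\kappa$ is obtained.

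A secondary inaccuracy: with the bubbles distributed as you describe and the exit sphere only one shell-width beyond $\partial B(0,R_k)$, the per-shell hitting probability saturates at a geometric ceiling (essentially the probability that Brownian motion started at radius $R+\rho$ even returns to radius $R$ before exiting $B(0,R+2\rho)$, which is about $1/2$, not $1$); increasing $C_k$ does not push $p_k\to 1$. This does not sink the argument --- a fixed $\kappa>0$ per shell already gives a divergent series in Proposition \ref{nlogn} --- but it shows the heuristic behind Step 1 is not reliable, which is precisely why the paper devotes Sections \ref{choice} and \ref{section-crucial} to producing a uniform $\kappa=\kappa(d)>0$ independent of $r$ and $k$.
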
 

Let us finish this section explaining  in some detail how these results are obtained.
Given an exhaustion of an arbitrary domain $U$ by a sequence $(V_n)$ of bounded open subsets,
we first present a criterion for unavoidable sets $A$ in $U$  in terms of probabilities for Brownian motion, starting in $\ov V_n$, 
to hit $A$ before leaving $V_{n+1}$  (Section~2). 

To apply this criterion we prove the existence of  $c>0$ and $\kappa>0$ such that the following holds (Sections 3 and 5):
 Given  $1/2<R<1$  and $0< \rho\le 1/3$, there exists 
$0<\rho_0\le \rho/3$  such that,  for \emph{every} $0<r< \rho_0$,
we may choose a~finite subset~$X_r$ of~$\partial B(0,R)$ satisfying
\begin{itemize}
\item [\rm (i)] the product $\# X_r\cdot \vp(r)$ is bounded  by~$c_{}\rho\inv$,
\item [\rm (ii)] the balls $\ov B(x,r)$, $x\in X_r$, are pairwise disjoint,
\item [\rm (iii)] starting in \hbox{$\ov B(0,R+\rho)$} Brownian motion hits the union of the balls $\ov B(x,r)$, $x\in X_r$, before leaving 
$B(0,R+2\rho)$ with a probability which is at least~$\kappa$.
\end{itemize} 
In Section 4 we give a straightforward application of our construction $X_r$ to the unit ball considering an exhaustion
$(B(0,R_k))_{k\ge k_0}$ given by $R_{k+1}-R_k= (k\log^2 k)\inv$ and a~``one-bubble-estimate'' for the global Green function.
The resulting   Proposition~\ref{bubble} is already fairly close to Theorem~C.

The proof of (iii)  in Section \ref{section-crucial} will be based on a comparison of 
 the sum of the potentials for the points $x\in X_r$  with the equilibrium potential for $\ov B(0,R)$ 
 (both with respect to $B(0,R+2\rho)$). The proof of Theorem~\ref{dream-unit} is now easily accomplished (Section~6).
Indeed, given $R_k\uparrow 1$, it suffices to take $\rho_k:=(R_{k+1}-R_k)/2$ and to choose
$0<r_k< \rho_{0,k} \le \rho_k$ with $c\rho_k\inv h(r_k)<2^{-k}\delta$ and $r_k\le (R_k-R_{k-1})/2$.

Finally, using the ingredients of this proof, we obtain Theorem~\ref{main}  in full generality (Section~7).

\section{A general criterion for unavoidable sets} \label{one-bubble}

 Given an open set $W$ in $\reald$ and a  bounded Borel measurable function~$f$ on $\reald$,   
let $H_Wf$ denote the function which extends  the (generalized) Dirichlet solution  \hbox{$x\mapsto \int f\,d\mu_x^W$}, 
$x\in W$, to a function on $\reald$ taking the values $f(x)$ for $x\in\reald\setminus W$.
We shall use that the harmonic kernel $H_W$ has the following property: If $W'$ is an open set in $W$,
then $H_{W'}H_W=H_W$. 

Let $U\ne \emptyset$ be a connected open set  in $\reald$, $d\ge 2$, and let $A\subset U$ be  relatively closed. 
Then $A$ is unavoidable if and only if
\begin{equation*} 
H_{U\setminus A}1_A=1 \on\  U.
\end{equation*}

\begin{proposition}\label{nlogn}
Let $0\le \kappa_j\le 1$ and  $V_j$ be bounded open sets in $U$,   $j\ge j_0$,              
such that $\ov V_j\subset V_{j+1}$, $V_j\uparrow U$,  and the following holds:  For every   $j\ge j_0$     
and every $z\in \partial V_j\setminus A$, there exists a closed set~$E$ in~$A\cap V_{j+1}$ such that 
\begin{equation}\label{HUE}
H_{V_{j+1}\setminus E} 1_{E}(z)\ge     \kappa_j.
\end{equation} 
Then, for all $n,m\in \nat$, $j_0\le n<m$, 
\begin{equation}\label{criterion}
  H_{U\setminus A} 1_A\ge 1-\prod\nolimits_{n\le j<m} (1-\kappa_j) \on\ \ov V_n. 
\end{equation} 
In particular,  $A$ is unavoidable if the series $\sum\nolimits_{j\ge j_0}\kappa_j$ is divergent.
\end{proposition}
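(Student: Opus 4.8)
The plan is to establish the pointwise estimate (\ref{criterion}) by induction on $m-n$, exploiting the tower property $H_{W'}H_W = H_W$ for nested open sets together with the hypothesis (\ref{HUE}) as a uniform ``one-step'' hitting lower bound on each shell $\ov V_j$. The base case $m = n+1$ is the heart of the matter: fix $z \in \ov V_n$ and, applying the hypothesis at level $j = n$, choose a closed set $E \subset A \cap V_{n+1}$ with $H_{V_{n+1}\sms E}1_E(z) \ge \kappa_n$ (for $z$ in the interior $V_n$ one should first pass through $\partial V_n$; but since $\ov V_n \subset V_{n+1}$ and $E \subset V_{n+1}$, one can also apply the hypothesis directly, or harmonically average the boundary estimate). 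Since $E \subset A \cap V_{n+1} \subset A$ and $V_{n+1}\sms E \supset V_{n+1}\sms A \supset$ the relevant region, monotonicity of $1_E \le 1_A$ and the fact that $H_{U\sms A}$ dominates the solution computed in the smaller domain $V_{n+1}\sms E$ — because hitting $E$ before leaving $V_{n+1}$ forces hitting $A$ before leaving $U$ — gives $H_{U\sms A}1_A(z) \ge H_{V_{n+1}\sms E}1_E(z) \ge \kappa_n = 1 - (1-\kappa_n)$, which is (\ref{criterion}) for $m = n+1$.

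For the inductive step, suppose (\ref{criterion}) holds with $m-1$ in place of $m$, i.e.\ $H_{U\sms A}1_A \ge 1 - \prod_{n\le j<m-1}(1-\kappa_j)$ on $\ov V_{n}$, and in fact the same bound with shifted index holds on each $\ov V_\ell$. The idea is to decompose a Brownian path started at $z \in \ov V_n$ according to whether it hits $A$ before reaching $\partial V_{n+1}$: with probability at least $\kappa_n$ it does (by the base-case argument), contributing fully to $H_{U\sms A}1_A$; otherwise it reaches $\partial V_{n+1}\sms A$, from where, by the inductive hypothesis applied on $\ov V_{n+1}$ with the index range $n+1 \le j < m$, it subsequently hits $A$ before leaving $U$ with probability at least $1 - \prod_{n+1\le j<m}(1-\kappa_j)$. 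Making this rigorous means writing, via the tower property, $H_{U\sms A}1_A = H_{V_{n+1}\sms E}\bigl(H_{U\sms A}1_A\bigr)$ for an appropriate $E$, splitting the kernel $H_{V_{n+1}\sms E}$ into its part on $E$ (where $H_{U\sms A}1_A = 1$) and its part on $\partial V_{n+1}$ (where we insert the inductive lower bound), and then combining $\kappa_n \cdot 1 + (1-\kappa_n)\bigl(1 - \prod_{n+1\le j<m}(1-\kappa_j)\bigr) = 1 - \prod_{n\le j<m}(1-\kappa_j)$.

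The main obstacle is the bookkeeping in that last step: one must be careful that $E = E(z)$ depends on the starting point, so the tower-property factorization cannot be applied with a single fixed $E$ uniformly; the clean way is to argue pointwise for each $z$ separately, using that $H_{U\sms A}1_A$ is itself a harmonic function on $U\sms A$ bounded below by the (already established) constant on $\ov V_{n+1}$, so that $H_{V_{n+1}\sms E}(H_{U\sms A}1_A)(z) \ge H_{V_{n+1}\sms E}\bigl(1_E + (1-\prod_{n+1\le j<m}(1-\kappa_j))1_{\partial V_{n+1}}\bigr)(z)$, and $H_{V_{n+1}\sms E}$ applied to the constant $1$ on $\partial(V_{n+1}\sms E) = E \cup \partial V_{n+1}$ equals $1$ there (subprobability subtleties when $U$ is unbounded are harmless since $V_{n+1}$ is bounded, so $H_{V_{n+1}\sms E}1 = 1$). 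Finally, the ``in particular'' clause is immediate: if $\sum_{j\ge j_0}\kappa_j = \infty$ then $\prod_{n\le j<m}(1-\kappa_j) \to 0$ as $m\to\infty$, so (\ref{criterion}) forces $H_{U\sms A}1_A \equiv 1$ on each $\ov V_n$, hence on $U = \bigcup_n \ov V_n$, i.e.\ $A$ is unavoidable.
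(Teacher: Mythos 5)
Your outline reproduces the probabilistic heuristic that the authors themselves sketch immediately after the proposition, but the analytic execution has a genuine gap at precisely the step you flag as ``the main obstacle''. The factorization $H_{U\sms A}1_A = H_{V_{n+1}\sms E}\bigl(H_{U\sms A}1_A\bigr)$ is false in general: the tower property $H_{W'}H_W=H_W$ requires $W'\subset W$, and $V_{n+1}\sms E\not\subset U\sms A$, because $E$ is only \emph{some} closed subset of $A\cap V_{n+1}$, so $V_{n+1}\sms E$ still contains points of $A$. Even the weakened inequality $H_{U\sms A}1_A\ge H_{V_{n+1}\sms E}\bigl(H_{U\sms A}1_A\bigr)$ is not free: $H_{U\sms A}1_A$ (extended by $1_A$) need not be superharmonic, nor even lower semicontinuous, at irregular points of $A$ lying inside $V_{n+1}\sms E$, so the sub-mean-value comparison over the kernel $H_{V_{n+1}\sms E}$ requires justification (e.g.\ via balayage modulo polar sets), which is more machinery than the statement needs. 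The same defect already affects your base case: ``hitting $E$ before leaving $V_{n+1}$ forces hitting $A$ before leaving $U$'' is true for paths, but analytically it compares Dirichlet solutions on two domains neither of which contains the other in the required direction; it must be routed through the intermediate domain $V_{n+1}\sms A$. Also note that hypothesis (\ref{HUE}) is only given at $z\in\partial V_j\sms A$, so ``apply the hypothesis directly'' at interior points of $V_n$ is not available.

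The missing ingredient --- and the paper's key first step --- is to convert the hypothesis about the $z$-dependent set $E$ into a bound for the \emph{fixed} domains $W_{j+1}:=V_{j+1}\sms A$. The minimum principle on $W_{j+1}\subset V_{j+1}\sms E$ gives $H_{W_{j+1}}1_{\partial V_{j+1}}\le 1-H_{V_{j+1}\sms E}1_E$, hence $H_{W_{j+1}}1_{\partial V_{j+1}}\le 1-\kappa_j$ on $\partial V_j\sms A$ (choosing $E$ anew for each $z$ is now harmless, since $E$ has disappeared from the left-hand side). Because the $W_j$ are nested and independent of $z$, the composition $H_{W_m}1_{\partial V_m}=H_{W_{n+1}}\cdots H_{W_m}1_{\partial V_m}\le\prod_{n\le j<m}(1-\kappa_j)$ on $\partial V_n$ is then legitimate, and one final minimum principle yields $H_{U\sms A}1_A\ge H_{W_m}1_A\ge 1-H_{W_m}1_{\partial V_m}$ on $\ov V_n$, which also takes care of the interior points. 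Your inductive bookkeeping $\kappa_n\cdot 1+(1-\kappa_n)\bigl(1-\prod_{n+1\le j<m}(1-\kappa_j)\bigr)$ and the ``in particular'' clause are fine, but without the transfer from $E$ to $V_{j+1}\sms A$ the induction does not close as written.
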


As we noticed later on, the probabilistic aspect of such a result has already been used 
in \cite{ortega-seip} and subsequently in \cite{carroll,odonovan}: Of course, Brownian motion starting in $V_n$ hits
$\partial V_n$ before reaching~$\partial V_{n+1}$.  Inequality (\ref{HUE}) implies  that a Brownian
particle starting at some $z\in\partial V_j\setminus A$, $n\le j<m$,  does  not hit $A$ 
before reaching~$\partial V_{j+1}$  with probability at most  $1-\kappa_j$. 
By induction and by the strong Markov property, it does not hit $A$ with probability at most 
$\prod\nolimits_{n\le j<m} (1-\kappa_j)$  before reaching $\partial V_m$, and therefore  
it  hits $A$ with probability at least $1-\prod\nolimits_{n\le j<m} (1-\kappa_j)$ before leaving $U$. 

\begin{proof}[Proof of Proposition  \ref{nlogn}] For $j\ge j_0$, let  $W_{j+1}:=V_{j+1}\setminus A$. If $E$ is a  closed set  in~$A\cap V_{j+1}$, 
then $H_{W_{j+1} }1_{\partial V_{j+1}}\le 1- H_{V_{j+1}\setminus E} 1_{E}$, by  the minimum principle. Hence, by~(\ref{HUE}), 
\begin{equation*} 
H_{W_{j+1} }1_{\partial V_{j+1}} \le  1-\kappa_j \on \partial V_j.
\end{equation*} 
Now let $n,m\in\nat$, $j_0\le n<m$. By induction,
\begin{equation*} 
    H_{W_m} 1_{\partial V_m}
    =H_{W_{n+1} }H_{W_{n+2}}\dots H_{W_m} 1_{\partial V_m}
           \le \prod\nolimits_{n\le j<m} (1- \kappa_j) \on \partial  V_n.
 \end{equation*} 
By the minimum principle,  we conclude that 
\begin{equation*} 
H_{U\setminus A} 1_A\ge      H_{W_m} 1_{ A} \ge  1-   H_{W_m} 1_{\partial V_m}\ge  1- \prod\nolimits_{n\le j<m} (1- \kappa_j) \on\  \ov V_n.
\end{equation*} 
\end{proof}

\section{Choice of bubbles and crucial estimate} \label{choice}

Let $R\in (1/2,1)$, $U=B(0,R)$,  and $\rho \in (0,1/3)$. 
For every $r>0$ which is sufficiently small, we shall choose an associated finite subset $X_r$ of $\partial U$ 
and consider the union $E_r$ of all bubbles $\ov B(x,r)$, $x\in X_r$. 
For $r>0$, we first define
\begin{equation}\label{ab-def}
 \b:=(\vp(r)\rho)^{1/(d-1)}.
\end{equation} 
In other words, we take $\b$ satisfying 
\begin{equation}\label{ar}
 \vp(r)=\b^{d-1}\rho\inv , \quad\mbox{ that is, }\quad 
      r=\begin{cases} \exp(-\rho/\b),&\quad\mbox{ if } d=2,\\[1.5mm]
                                        \b^{(d-1)/(d-2)}\rho^{-1/(d-2)},&\quad\mbox{ if }d\ge 3.
                   \end{cases}
\end{equation} 
It is easily seen that $\b<\rho$, if $r<\rho$. There exists $\rho_0\le \rho/3$ such that
\begin{equation}\label{rnan}
  r<  \b/3\, ,\qquad\mbox{ whenever } r\in(0, \rho_0) .
\end{equation} 
Indeed, if $d\ge 3$ and $r<3^{1-d}\rho$, then $r/\b=\bigl(r^{d-1}/(r^{d-2}\rho)\bigr)^{1/(d-1)}< 1/3$.
Assume  now that $d=2$ and $r<(1/18)\rho^2$. Then $\rho/\b=\log (1/r)<\log\{[\rho/(3r)]^2/2\}<\rho/3r$.

Given $0<r<\rho_0$, we choose  a finite subset $X_r$ of $\partial U$   such that the balls $B(x,\b)$, $x\in X_r$,        
 cover~$\partial U$ and the balls $B(x,\b/3)$, $x\in X_r$, are pairwise disjoint (such a~set~$X_r$ exists; see \hbox{\cite[Lemma 7.3]{Rudin}}).
By (\ref{rnan}), the balls $\ov B(x,r)$, $x\in X_r$, forming~$E_r$ are pairwise disjoint.
A~consideration of the  areas involved,  when intersecting the balls with~$\partial U$, 
shows that there exists a constant $c_{}=c_{}(d)>0$ such that 
\begin{equation}\label{area}
c_{}\inv \b^{1-d}\le  \# X_r \le c_{}\b^{1-d}
\end{equation} 
and hence, by (\ref{ar}), 
\begin{equation}\label{log-estimate}
c\inv \rho\inv \le \# X_r \cdot \vp(r) \le c_{} \rho\inv,
\end{equation} 
that is, the sum of the capacities of the bubbles $\ov B(x,r)$, $x\in X_r$, is approximately~$\rho\inv$.
Let us stress already now that, by  (\ref{log-estimate}), for any choice of $\rho \in (0,1/3) $, the product 
$$
\#X_r\cdot \vp(r)\, h(r)
$$
 is arbitrarily small  provided  $r$ is small enough. Defining
$$
     U':=B(0,R+\rho) \und V:=B(0,R+2\rho)
$$
the following Proposition~\ref{heart} (proved in Section \ref{section-crucial}) will hence quickly lead to Theorem~\ref{dream-unit}.

\begin{proposition}\label{heart}
There exists a constant $\kappa=\kappa(d)>0$ such that
\begin{equation}\label{HV-universal}
 H_{V\setminus E_r}1_{E_r}\ge \kappa   \on \ov{U'}, \qquad \mbox{ for \emph{every} }   r\in (0,\rho_0),
\end{equation} 
that is, Brownian motion starting in $\ov {U'}$ hits $E_r$ with
probability at least $\kappa$  before leaving $V$, whatever $0<r<\rho_0
$ is.
\end{proposition}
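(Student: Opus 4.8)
The goal is to bound from below, uniformly in $r\in(0,\rho_0)$, the hitting probability $H_{V\setminus E_r}1_{E_r}$ on $\ov{U'}$. The natural strategy is a potential-theoretic comparison: show that the sum $S_r:=\sum_{x\in X_r}\rhat_{\ov B(x,r)}^{V}$ of the equilibrium potentials of the bubbles (balayage with respect to $V=B(0,R+2\rho)$) is, on $\ov{U'}$, comparable to the equilibrium potential $p:=\rhat_{\ov B(0,R)}^{V}$ of the full inner ball $\ov B(0,R)$. Since $H_{V\setminus E_r}1_{E_r}\ge \rhat_{\ov B(x,r)}^{V}$ for each $x$ by the minimum principle, we cannot directly use the whole sum; instead one argues that on $\ov{U'}$ at least one term $\rhat_{\ov B(x,r)}^{V}(y)$ is bounded below by a fixed constant, OR — more robustly — that $S_r\le M$ on $V$ for an absolute constant $M$, while $S_r\ge c_1 p\ge c_2>0$ on $\ov{U'}$; then the bubble whose potential is largest at a given point $y\in\ov{U'}$ has potential at least $c_2/\#\{\text{relevant terms}\}$... which is too weak. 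The correct route is: use the domination principle to get $S_r\le M p$ on $V$ for a constant $M=M(d)$, hence (by the minimum principle applied on $V\setminus E_r$, where each $\rhat_{\ov B(x,r)}^V$ is harmonic off its own bubble) $H_{V\setminus E_r}1_{E_r}\ge M^{-1}S_r\ge M^{-1}c_1 p\ge M^{-1}c_1\inf_{\ov{U'}}p=:\kappa>0$.

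\textbf{Key steps, in order.} First, fix the scaling. By the choice $\b=(\vp(r)\rho)^{1/(d-1)}$ and $\#X_r\approx \b^{1-d}$, the total capacity $\#X_r\cdot\vp(r)\approx\rho\inv$, which matches the order of $\kap(\ov B(0,R))/\kap(\text{a unit-scale set})$ — this is the heuristic that makes $S_r\approx p$ plausible. Second, prove the upper bound $S_r\le M p$ on all of $V$: the potential of $\ov B(x,r)$ at a point $y$ with $|y-x|=s$ is of order $\vp(r)/\vp(s)=\vp(r)N(s)$ (for $s$ not too close to $\partial V$); summing over $x\in X_r\subset\partial B(0,R)$, which are $\b$-separated on an $(d-1)$-sphere, the sum $\sum_x \vp(r)N(|y-x|)$ is dominated (via comparison with an integral over $\partial B(0,R)$ against surface measure, using $\#X_r\cdot\b^{d-1}\approx 1$) by a constant times $\vp(r)\b^{1-d}\int_{\partial B(0,R)}N(|y-z|)\,d\sigma(z)\approx\rho\inv\cdot(\text{single-layer potential of }\partial B(0,R))$, and the single-layer potential of the uniform measure on $\partial B(0,R)$ is itself comparable on $V$ to $p$ (both are $\approx 1$ near $\ov B(0,R)$ and decay like the Green function of $V$ near $\partial V$); the factor $\rho\inv$ is absorbed because $p$, as the equilibrium potential of a ball of radius $R$ relative to a ball of radius $R+2\rho$, has total mass of order $\rho\inv$ as well. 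Care is needed for the ``diagonal'' terms $|y-x|\lesssim\b$: there at most a bounded number of bubbles contribute, each with potential $\le 1$, contributing $O(1)\le O(1)\cdot p$ since $p\ge c>0$ on a neighborhood of $\ov B(0,R)$. Third, prove the lower bound $S_r\ge c_1 p$ on $\ov{U'}$ by the same integral comparison in the reverse direction (the covering property of the balls $B(x,\b)$ gives the matching lower bound on $\#X_r$ and prevents the sum from being too small). Fourth, since $\ov{U'}=\ov B(0,R+\rho)$ is a fixed compact subset of $V=B(0,R+2\rho)$ on which $p$ is bounded below by a positive constant depending only on $d$ (after the rescaling $R\in(1/2,1)$, $\rho\in(0,1/3)$ — one should check the constant can be taken uniform in $R,\rho$ in this range, e.g. by an explicit computation of $p$ for concentric balls), conclude $H_{V\setminus E_r}1_{E_r}\ge M^{-1}c_1\inf_{\ov{U'}}p\ge\kappa(d)>0$.

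\textbf{Main obstacle.} The delicate point is the uniformity of all constants in $r$ (that is the whole content of Proposition~\ref{heart}) and, simultaneously, in the parameters $R\in(1/2,1)$ and $\rho\in(0,1/3)$. Uniformity in $r$ is built into the scaling relation \eqref{ar}: the bubble-to-gap ratio $r/\b<1/3$ by \eqref{rnan} is what keeps the single-bubble estimates $\rhat_{\ov B(x,r)}^{V}(y)\asymp \vp(r)N(|y-x|)$ valid at the relevant scale for all $r<\rho_0$, and the total-capacity identity $\#X_r\vp(r)\asymp\rho\inv$ is exactly scale-invariant. The real work is the two-sided comparison of the discrete sum $S_r$ with the continuous single-layer potential, which requires splitting into near-diagonal terms (bounded in number, each $\le 1$) and far terms (Riemann-sum approximation of $\int_{\partial B(0,R)}N(|y-z|)\,d\sigma(z)$ with error controlled by the $\b$-separation and $\b$-covering), and then identifying that continuous potential with $p$ up to bounded factors uniformly in $R,\rho$. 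I would handle the uniformity in $R,\rho$ by an explicit rescaling reducing to, say, $R$ bounded away from $0$ and $1$ and $\rho$ of unit order — or simply carry the dependence and note it is harmless because Theorem~\ref{dream-unit} only needs $\kappa$ independent of $r$ for each fixed sequence $(R_k,\rho_k)$, while Theorem~\ref{main} reduces the general domain to a ball by a conformal/affine change plus Harnack, at which point a single $\kappa(d)$ suffices.
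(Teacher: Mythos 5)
Your plan is essentially the paper's proof: the paper compares the potential $G\mu$ of the weighted point masses $\mu=\b^{d-1}\sum_{x\in X_r}\ve_x$ with the single-layer potential $G\sigma$ of the normalized surface measure on $\partial B(0,R)$ (which, by the explicit formula $G\sigma=\min\{G(\cdot,0),N(R)-N(R+2\rho)\}$, is a constant multiple $\asymp\rho$ of your equilibrium potential $p$), establishes the two-sided comparison by partitioning $\partial B(0,R)$ into pieces of surface measure $\asymp\b^{d-1}$ around the points of $X_r$ together with a Harnack-type inequality for the Green function of $V$ — exactly your Riemann-sum argument, with uniformity in $R\in(1/2,1)$ and $\rho\in(0,1/3)$ coming from the explicit formulas — and then concludes via the minimum principle just as you propose. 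The only cosmetic differences are that the paper uses point masses rather than the equilibrium potentials of the individual bubbles, so the diagonal term on $\partial E_r$ is handled by the elementary one-bubble bound $\b^{d-1}G(y,x)\le 2\b^{d-1}N(r)=2\rho$ instead of a two-sided Green-capacity estimate for small balls relative to $V$.
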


\section{Result based on  a ``one-bubble-approach''}   

It may be surprising that, having Proposition \ref{nlogn} and our construction of unions~$E_r$ of bubbles  centered 
at spheres $B(0,R)$, 
already a ``one-bubble-approach'',  which only uses the global Green function with one pole, 
immediately yields a  result  which   is  almost as strong as Theorem~C.

For Proposition \ref{bubble}, a sequence $(R_k)_{k\ge k_0}$ will be chosen in the following way.
We fix $k_0\ge 3^{d-1}$ such that $\sum_{j\ge k_0} (j \log^2 j)\inv <1/2$ and $e^{-k}<(9k\log^2k)\inv$, 
for $k\ge k_0$. For every $k\ge k_0$,  let 
$$
    R_k:=1-\sum\nolimits_{j\ge k} (j \log^2j)\inv, \quad  U_k:=B(0,R_k), \quad V_k:=B(0, R_k+ (2 k\log^2k)\inv).
$$
 To apply our construction in Section 3 let us, for the moment, fix $k\ge k_0$
and let $R:=R_k$, $\rho:=(3\log^2k)\inv< 1/3$ so that $U=U_k$, $U'=V_k$, and $V=U_{k+1}$.
Further, let 
\begin{equation*} 
  r:=\begin{cases} e^{-k},&\quad\mbox{ if } d=2,\\[1.5mm]
                             k^{-(d-1)/(d-2)}\rho,&\quad\mbox{ if }d\ge 3.
                   \end{cases}
\end{equation*} 
Then $\b:=\rho/k$ satisfies $\vp(r)= \b^{d-1}\rho\inv$ and $r<\b/3<\rho/3$.  
So we may choose a~corresponding finite set $X_r$ and take $X_k:=X_r$, $r_k:=r$.    
Let us already notice that $r_k/(1-R_k)< \rho/(3k) \cdot k\log^2k\le 1/9$.

\begin{proposition}\label{bubble}  Let $\ve>1/(d-1)$ and $\delta>0$. Then 
there exists $K\ge k_0$ such that,  taking 
$$
 A:=\bigcup\nolimits_{x\in X_k, k\ge K}  \ov B(x,r_k),
$$
the set $B(0,1)\setminus A$ is a~champagne subdomain,
$A$ is unavoidable, and  
\begin{equation}\label{s-weak}
          \sum\nolimits_{x\in X_A}  \vp(r_x)^{1+\ve}\ < \  \delta. 
\end{equation}                               
\end{proposition}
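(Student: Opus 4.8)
The plan is to apply Proposition \ref{nlogn} with the exhaustion $(V_k)_{k\ge k_0}$ (so that $\ov{V_k}\subset V_{k+1}$ since $R_k + (2k\log^2k)\inv < R_{k+1}$, which follows from the choice $e^{-k} < (9k\log^2 k)\inv$ and $V_k\uparrow B(0,1)$) and constants $\kappa_k := \kappa$, the universal constant from Proposition \ref{heart}. First I would verify the geometric hypotheses: for each $k$ we have $U = U_k = B(0,R_k)$, $U' = V_k = B(0,R_k+\rho)$ with $\rho = (3\log^2 k)\inv$, and $V = U_{k+1} = B(0, R_k + 2\rho)$, and the set $E_r = \bigcup_{x\in X_k}\ov B(x,r_k)$ is a union of pairwise disjoint bubbles contained in $A\cap V_{k+1}$. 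The role of the closed set $E$ in \eqref{HUE} is played by $E_{r_k}$, and Proposition \ref{heart} gives exactly $H_{V\setminus E_{r_k}}1_{E_{r_k}}\ge \kappa$ on $\ov{U'}\supset \partial V_k$, which is \eqref{HUE} with $\kappa_j = \kappa$. Since $\sum_{k\ge K}\kappa = \infty$, Proposition \ref{nlogn} yields that $A$ is unavoidable. One small point to check is that $A$ genuinely defines a champagne subdomain: the bubbles are pairwise disjoint (within each sphere by construction of $X_r$, and across spheres because $r_k < \rho_k/3$ is small compared with the sphere separations), $\sup_x r_x/\dist(x,\partial B(0,1)) \le \sup 1/9 < 1$ by the remark $r_k/(1-R_k) < 1/9$ already recorded, and there is no accumulation point in $B(0,1)$ since $R_k\uparrow 1$.

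The remaining work is the convergence estimate \eqref{s-weak}. Here I would use \eqref{log-estimate} (or directly \eqref{area}): for each $k$, $\# X_k = \# X_{r_k} \le c\,\b_k^{1-d}$ where $\b_k = \rho_k/k = (3k\log^2 k)\inv$, and $\vp(r_k) = \b_k^{d-1}\rho_k\inv$. Hence
\[
\sum_{x\in X_k}\vp(r_k)^{1+\ve} = \# X_k\cdot \vp(r_k)^{1+\ve} \le c\,\b_k^{1-d}\cdot\bigl(\b_k^{d-1}\rho_k\inv\bigr)^{1+\ve} = c\,\b_k^{(d-1)\ve}\rho_k^{-(1+\ve)}.
\]
Substituting $\b_k = \rho_k/k$ gives $c\,k^{-(d-1)\ve}\rho_k^{(d-1)\ve - 1 - \ve}$, and with $\rho_k = (3\log^2 k)\inv$ this is $c'\,k^{-(d-1)\ve}(\log^2 k)^{\,1 + \ve - (d-1)\ve}$ up to the constant $3^{(d-1)\ve - 1 - \ve}$. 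The hypothesis $\ve > 1/(d-1)$ is exactly what makes the exponent $(d-1)\ve > 1$, so the series $\sum_k k^{-(d-1)\ve}(\log k)^{\text{power}}$ converges. Choosing $K\ge k_0$ large enough that the tail $\sum_{k\ge K}$ of this convergent series is less than $\delta$ gives \eqref{s-weak}.

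The only mildly delicate point is bookkeeping the power of $\log k$ and confirming it does not interfere with convergence; but since $(d-1)\ve > 1$ strictly, any fixed power of $\log k$ is harmless, so there is no real obstacle. Everything else is an assembly of Proposition \ref{nlogn}, Proposition \ref{heart}, and the elementary counting estimate \eqref{log-estimate}, together with the already-noted inequalities guaranteeing that the construction produces a legitimate champagne subdomain of $B(0,1)$.
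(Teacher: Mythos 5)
Your convergence estimate for \eqref{s-weak} is correct and is essentially the paper's own computation: $\# X_k\,\vp(r_k)^{1+\ve}\le c\,\rho^{-1}\vp(r_k)^{\ve}\le 3c\,(\log^2k)\,k^{\ve(1-d)}$, which is summable precisely because $(d-1)\ve>1$. The verification that $B(0,1)\setminus A$ is a champagne subdomain is also fine. The problem is the unavoidability argument. Your key step is the identification ``$U'=V_k$ and $V=U_{k+1}$'' for the Section~4 parameters, and this is false: with $R=R_k$ and $\rho=(3\log^2k)^{-1}$ one has $V=B\bigl(0,R_k+\tfrac23\log^{-2}k\bigr)$, whereas $R_{k+1}-R_k=(k\log^2k)^{-1}$, so the radius of $V$ overshoots $R_{k+1}$ by a factor of order $k$ and $V$ actually contains on the order of $k$ of the subsequent spheres $\partial U_{k+1},\partial U_{k+2},\dots$ (the same parenthetical claim appears in the paper, but the paper never uses it). Consequently Proposition~\ref{heart} only tells you that Brownian motion started on $\ov{U'}$ hits $E_{r_k}$ with probability $\ge\kappa$ \emph{before leaving the large ball $V$}; since $V_{k+1}\subsetneq V$, the inequality $H_{V_{k+1}\setminus E}1_E\ge H_{V\setminus E}1_E$ that you would need goes in the wrong direction, and hypothesis \eqref{HUE} of Proposition~\ref{nlogn} is not verified for the exhaustion $(V_k)$ (nor for $(U_k)$). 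So as written the unavoidability claim has a genuine gap.

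The paper closes Proposition~\ref{bubble} without invoking Proposition~\ref{heart} at all (indeed, Section~4 is advertised as a ``one-bubble approach''): for $z\in\partial U_k\setminus A$ one picks $x\in X_k$ with $|z-x|<\b$, takes the \emph{single} bubble $E=\ov B(x,r_k)$, and minorizes $H_{U_{k+1}\setminus E}1_E$ on $U_{k+1}\setminus E$ by $g(y)=\vp(r)\bigl(N(|y-x|)-N(3\b)\bigr)$ via the minimum principle (using that $B(x,3\b)\subset U_{k+1}$ and $g\le1$ on $\partial E$), obtaining $H_{U_{k+1}\setminus E}1_E(z)\ge(2/3)\vp(r)\b^{2-d}=(2/3)\b/\rho=(2/3)k^{-1}$. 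The resulting $\kappa_k=(2/3)k^{-1}$ is not bounded below, but $\sum_k k^{-1}=\infty$ is all that Proposition~\ref{nlogn} requires. If you insist on using Proposition~\ref{heart}, your route can be repaired, but only after thinning: choose a (necessarily geometric) subsequence $k_1<k_2<\dots$ with $R_{k_i}+\tfrac23\log^{-2}k_i<R_{k_{i+1}}$, so that $V^{(k_i)}\subset U_{k_{i+1}}$, and apply Proposition~\ref{nlogn} to the nested balls $U'^{(k_i)}$ with $E=E_{r_{k_i}}$ and constant $\kappa_i=\kappa$; this again gives a divergent series and hence unavoidability of $A$. Either fix is needed; the proposal as it stands does not establish \eqref{HUE}.
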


\begin{proof}  Let $k\ge k_0$. By (\ref{ar}), $\vp(r_k)\le k^{1-d} $. Hence, by (\ref{log-estimate}), 
$$
\# X_k \, \vp(r_k)^{1+\ve} \le c_{}(3\log^2 k)  \vp(r_k)^\ve \le   c_{}(3\log^2 k)   k^{\ve(1-d)}.
$$
So (\ref{s-weak}) holds,   if $K$ is sufficiently large.              

We next claim that the union $A$ of all $\ov B(x,r_k)$, $x\in X_k$, $k\ge K$, is unavoidable. Indeed, let  us fix $k\ge K$ and let $\b, r$ be  as above. 
 Let $z\in \partial U_k\setminus A$. 
There exists $x\in  X_k$ such that $|z-x|<\b$. We define $E:=\ov B(x,r)$ and 
\begin{equation*} 
                           g(y):=\vp(r) \bigl(N(|y-x|)-N(3\b) \bigr) , \qquad y\in\reald.
\end{equation*} 
Since $3\b<(k\log^2k)\inv$, we know that $B(x,3\b)\subset U_{k+1}$,  and hence $g\le 0$ on~$ \partial U_{k+1}$. 
Further,  $g\le \vp(r)N(r)=1$ on the boundary of~$E$.   By the minimum principle,
$$
           H_{U_{k+1}\setminus E} 1_E\ge g \on U_{k+1}\setminus E.
$$
Clearly,  $N(|z-x|)-N(3\b)\ge (2/3)\b^{2-d}$ , since $\log 3\ge 1$ and $1-3^{2-d}\ge 2/3$ for $d\ge 3$.
Therefore, by (\ref{ar}), 
$$
H_{U_{k+1}\setminus E} 1_E(z)\ge g(z)\ge (2/3)\vp(r)\b^{2-d}=(2/3)\b/\rho=(2/3) k\inv.
$$
By Proposition \ref{nlogn}, $A$~is unavoidable. Clearly, $B(0,1)\setminus A$ is a champagne subdomain.
\end{proof} 

\begin{remark}{
If $d\ge 3$, then $\vp(r_x)^{1+\ve}=r_x^{(d-2)(1+\ve)}$, where the critical exponent
$(d-2)(1+1/(d-1))=d-1-1/(d-1)$ is strictly smaller than $d-1$.
}
\end{remark}

\section{Proof of the crucial estimate} \label{section-crucial}

For a proof of Proposition \ref{heart} let us now return to the general setting of  Section \ref{choice},
where $0<R<1/2$, $0<\rho< 1/3$, $U=B(0,R)$, $U'=B(0,R+\rho)$, $V=B(0,R+2\rho)$,
and  let $G$ be the Green function for $V$.
Further, we have $0<\rho_0\le \rho/3$ such that  $3r<\b=(\vp(r) \rho)^{1/(d-1)}$, for every $0<r<\rho_0$.

\begin{lemma}\label{zzy} There exists a constant $c_1:=c_1(d)>0$ such that 
\begin{equation*}
G(y,z)\le c_1 G(y,z'), \quad \mbox{ if }  y\in V \mbox{ and } z,z'\in \partial U  \mbox{ with } |y-z'|\le 4 |y-z|.
\end{equation*} 
\end{lemma}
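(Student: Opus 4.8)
The plan is to use standard estimates for the Green function $G$ of the ball $V = B(0,R+2\rho)$, comparing its behavior at two boundary points $z,z'$ of the inner sphere $\partial U = \partial B(0,R)$ that are ``seen'' from $y$ at comparable distances. The key point is that $\partial U$ sits at distance at least $2\rho$ from $\partial V$, so every point of $\partial U$ is well inside $V$, and the Green function there is governed by the explicit formula (or, equivalently, by the Harnack inequality together with boundary decay estimates).

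First I would recall the explicit expression for the Green function of a ball: for $V = B(0,\rho')$ with $\rho' := R+2\rho$, one has, up to dimensional constants, $G(y,z) \asymp N(|y-z|) - N\bigl(|y|\,|z - y^\ast|/\rho'\bigr)$ where $y^\ast = (\rho')^2 y/|y|^2$ is the reflection of $y$; more usefully, there is the two-sided estimate
\[
G(y,z) \asymp \min\Bigl\{\,\vp(|y-z|)\inv,\ \frac{d(y)\,d(z)}{|y-z|^{d-2}}\cdot(\cdots)\Bigr\},
\]
but the cleanest route is the classical bound
\[
G(y,z) \asymp N(|y-z|)\wedge 1 \ \cdot\ \min\Bigl(1,\frac{d(y)d(z)}{|y-z|^2}\Bigr)
\]
valid on a ball, where $d(\cdot) = \dist(\cdot,\partial V)$. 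Since $z,z'\in\partial U$ both satisfy $d(z),d(z')\ge 2\rho$ and $|y-z|,|y-z'|\le \diam V \le 2$, the relevant quantities are all comparable once we know $|y-z|\asymp|y-z'|$. So the real content is: from the hypothesis $|y-z'|\le 4|y-z|$ deduce $|y-z|\le C|y-z'|$ for a dimensional constant $C$, i.e. that the two distances are in fact comparable. This is where the geometry of the configuration enters.

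The main obstacle, then, is this comparability of $|y-z|$ and $|y-z'|$. It is \emph{not} true for arbitrary $y\in V$ and $z,z'\in\partial U$ — one could have $y$ very close to $z$ and $z'$ far away. The resolution must use that $y$ ranges over $V$ while $z,z'$ lie on $\partial B(0,R)$ with $R>1/2$ and $V=B(0,R+2\rho)$, $\rho\le 1/3$: every point $y\in V$ is within distance $R+2\rho \le 1$ of the origin, hence within distance at most $2$ of any point of $\partial U$, while two points $z,z'$ on $\partial U$ are at mutual distance at most $2R < 2$. Thus $|y-z|$ and $|y-z'|$ are \emph{both} bounded above by a dimensional constant; the hypothesis $|y-z'|\le 4|y-z|$ then only needs to be supplemented by a \emph{lower} bound on $|y-z|$ when $y$ is close to $z$. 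In that regime ($|y-z|$ small), $|y-z'|\le 4|y-z|$ is also small, so $y$ is close to both $z$ and $z'$, and then $G(y,z)\asymp \vp(|y-z|)\inv$, $G(y,z')\asymp\vp(|y-z'|)\inv$ (the boundary factors being $\asymp 1$ since $d(y),d(z),d(z')\gtrsim\rho$ while $|y-z|,|y-z'|\lesssim\rho$ would force... ) — here one must be a little careful, splitting into the case $|y-z|\le \rho$ and $|y-z|>\rho$, but in each case both Green functions reduce to the same explicit comparable expression. I would organize the proof as: (a) uniform upper bounds $|y-z|,|y-z'|\le 2$; (b) the case $|y-z|\ge \rho$, where Harnack's inequality along a chain in $V\setminus\{$small ball around $y\}$ connecting $z$ to $z'$ at controlled ``cost'' gives $G(y,z)\asymp G(y,z')$ with constant depending only on $d$ (using $\rho\le 1/3$ to bound the chain length); (c) the case $|y-z|<\rho$, where $|y-z'|<4\rho$ too, and the explicit ball-Green-function estimate gives $G(y,z)\asymp \vp(|y-z|)\inv \asymp \vp(|y-z'|)\inv \asymp G(y,z')$ since $\vp$ is doubling on $(0,1)$ and $|y-z|\asymp|y-z'|$ within a factor $4$. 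Combining (b) and (c) yields the claim with $c_1 = c_1(d)$.
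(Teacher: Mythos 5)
You have the right main tool---the two-sided estimate $c_0\inv F\le G\le c_0F$ for the Green function of the ball $V$ from \cite[Theorem 4.1.5]{gardiner-armitage}, which is exactly what the paper uses---but your reduction of the lemma to the two-sided comparability $|y-z|\asymp|y-z'|$ is a wrong turn, and the patches you propose do not close the gap. The lemma is a \emph{one-sided} inequality, and the one-sided hypothesis $|y-z'|\le 4|y-z|$ feeds directly into a one-sided comparison of $F$: the decisive observation you never use is that $z,z'$ lie on the \emph{same} sphere $\partial B(0,R)$, so the boundary factors $R+2\rho-|z|$ and $R+2\rho-|z'|$ in $\Psi(y,\cdot):=(R+2\rho-|y|)(R+2\rho-|\cdot|)/|y-\cdot|^2$ coincide. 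Hence $\Psi(y,z)/\Psi(y,z')=|y-z'|^2/|y-z|^2\le 16$ and $|y-z|^{2-d}\le 4^{d-2}|y-z'|^{2-d}$, giving $F(y,z)\le 4^dF(y,z')$ both for $d\ge 3$ and for $d=2$ (via $\log(1+16s)\le 16\log(1+s)$), and $c_1:=4^dc_0^2$ works. No case distinction and no lower bound on $|y-z|$ is needed.

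Two of your steps would in fact fail. In case (b), the Harnack chain connecting $z$ to $z'$ in $V\setminus\ov B(y,\cdot)$ must keep its balls at distance $\lesssim 2\rho$ from $\partial V$ (since $\dist(z,\partial V)=\dist(z',\partial V)=2\rho$) while $|z-z'|$ may be of order $1$, so the chain has length of order $1/\rho$ and yields a constant of the form $C^{1/\rho}$, not a constant depending only on $d$; the parenthetical ``using $\rho\le 1/3$ to bound the chain length'' goes the wrong way, as $\rho$ may be arbitrarily small. Moreover, the two-sided conclusion $G(y,z)\asymp G(y,z')$ aimed at in (b) is simply false under the hypothesis (take $|y-z|=\rho$ and $y$ extremely close to $z'$: the hypothesis holds but $G(y,z')\gg G(y,z)$); only the stated one-sided bound is true. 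In case (c) with $d=2$, the identification $G(y,z)\asymp\vp(|y-z|)\inv=\log(1/|y-z|)$ also fails when $|y-z|\asymp\rho$ with $\rho$ small, since then $\Psi(y,z)\asymp 1$, so $G(y,z)\asymp 1$ while $\log(1/|y-z|)\asymp\log(1/\rho)$ is large. Both difficulties evaporate once you abandon two-sided comparability and compare $F(y,z)$ with $F(y,z')$ directly, exploiting $|z|=|z'|=R$.
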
 

\begin{proof} For $y,z\in V$, let   $\Psi(y,z) :=(R+2\rho-|y|)(R+2\rho-|z|)/|y-z|^2$ and  
\begin{equation*}
F(y,z):= \begin{cases}   \log \bigl(1+\Psi(y,z)\bigr),&\quad d=2,\\
                                               \min\{1, \Psi (y,z)\} |y-z|^{2-d},&\quad d\ge 3.
                         \end{cases}
\end{equation*} 
If $y\in V$ and  $z,z'\in \partial U$ with $|y-z'|\le 4|y-z|$, then $\Psi(y,z)\le 4^2 \Psi(y,z')$,
and hence $F(y,z)\le 4^d F(y,z')$.
It follows immediately from \cite[Theorem 4.1.5]{gardiner-armitage} that there exists a constant $c_0=c_0(d)$ 
such that $c_0\inv F\le G\le c_0 F$. So it suffices to take $c_1:=4^d c_0^2$. 
\end{proof}

For every measure $\chi$ on $V$, let
$G\chi(y):= \int G(y,z)\,d\chi(z)$,  $y\in V$. 
Let  $\sigma$ be the normalized surface measure on $\partial U$. We note that 
\begin{equation}\label{gs-formula}
G(\cdot,0)=N(|\cdot|)-N(R+ 2\rho), \quad  G\sigma=\min\{G(\cdot,0), N(R) -N(R+2\rho)\}.
\end{equation} 

Now we fix $r\in (0,\rho_0)$ and define 
$$
        \mu:=\b^{d-1}\sum\nolimits_{x\in X_r} \ve_x.
$$
Since $c_{}\inv \le \|\mu\|\le c_{}$, by (\ref{area}),  and $X_r$ is  distributed on $\partial U$ in a fairly regular way, there is a close relation
between $G\mu$ and $G\sigma$. We shall use the following.

\begin{lemma}\label{Gsigmamu}
There exists a constant $C=C(d)>0$ such that $G\sigma\le C G\mu$ on $\partial U'$ and,  for every $x\in X_r$,
\begin{equation*} 
G\mu\le  \b^{d-1}G(\cdot, x)+CG\sigma \on \ov B(x,r).
\end{equation*} 
\end{lemma}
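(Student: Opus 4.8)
The plan is to exploit the regular distribution of the points $X_r$ on $\partial U$ and the Harnack-type comparison of Green function values at nearby boundary points provided by Lemma~\ref{zzy}. Recall that $\mu=\b^{d-1}\sum_{x\in X_r}\ve_x$ and the balls $B(x,\b)$, $x\in X_r$, cover $\partial U$ while the balls $B(x,\b/3)$ are pairwise disjoint; moreover $c^{-1}\le\|\mu\|\le c$. The philosophy is that $\mu$ is a good discrete substitute for the (unnormalized) surface measure $\b^{d-1}\,\#X_r\cdot\sigma$, which by (\ref{area}) has total mass comparable to a constant, so $G\mu$ and $G\sigma$ should differ only by a bounded factor away from the individual atoms.

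For the first inequality, $G\sigma\le C\,G\mu$ on $\partial U'$, I would argue as follows. Fix $y\in\partial U'$, so $\dist(y,\partial U)=\rho$ and in particular $|y-z|\ge\rho\ge 3\b>|y-z'|$-type separation fails, but more usefully $|y-z|$ is bounded below by $\rho$ for all $z\in\partial U$. Write $G\sigma(y)=\int_{\partial U}G(y,z)\,d\sigma(z)$. Cover $\partial U$ by the balls $B(x,\b)$, $x\in X_r$, and for each $z\in\partial U\cap B(x,\b)$ compare $G(y,z)$ with $G(y,x)$: since $|y-x|\le |y-z|+\b\le |y-z|+\rho/3\le 2|y-z|$ and symmetrically $|y-z|\le 2|y-x|$ (using $\rho\le 3\b$ is the wrong direction; rather use $\b\le\rho=|y-z'|$ crudely, or better $\b<\rho/3\le|y-z|/3$ since $y\in\partial U'$ forces $|y-z|\ge\rho$ for $z\in\partial U$), Lemma~\ref{zzy} gives $G(y,z)\le c_1 G(y,x)$. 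The measure $\sigma$ of $\partial U\cap B(x,\b)$ is at most a constant times $\b^{d-1}$ (comparing surface areas, as in the derivation of (\ref{area})). Summing over the covering, with bounded overlap, yields $G\sigma(y)\le C'\b^{d-1}\sum_{x\in X_r}G(y,x)=C' G\mu(y)$.

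For the second inequality, fix $x\in X_r$ and $y\in\ov B(x,r)$. Split $\mu=\b^{d-1}\ve_x+\mu'$ where $\mu'=\b^{d-1}\sum_{x'\neq x}\ve_{x'}$, so $G\mu(y)=\b^{d-1}G(y,x)+G\mu'(y)$, and it remains to bound $G\mu'(y)$ by $C\,G\sigma(y)$. For $x'\in X_r$ with $x'\neq x$ we have $|x-x'|\ge\b/3$ (disjointness of the $B(x',\b/3)$, and $x,x'\in\partial U$ so they lie in $\ov B(x',\b/3)$? no — rather $x\notin B(x',\b/3)$ is not immediate; use instead that $B(x,\b/3)$ and $B(x',\b/3)$ are disjoint, hence $|x-x'|\ge 2\b/3$). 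Since $|y-x|\le r<\b/3$, we get $|y-x'|\ge|x-x'|-r\ge\b/3$, while for a comparison point $z'\in\partial U$ near $x'$ one has $|y-z'|$ comparable to $|x-x'|$ comparable to $|y-x'|$; Lemma~\ref{zzy} then lets me replace $G(y,x')$ by an average of $G(y,\cdot)$ over $\partial U\cap B(x',\b)$, and summing over $x'\neq x$ with bounded overlap gives $G\mu'(y)\le C\int_{\partial U}G(y,z)\,d\sigma(z)=C\,G\sigma(y)$. One must also check that the atom at $x$ itself does not secretly contribute through the $G\sigma$ bound — but that is fine since we isolated it.

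The main obstacle I anticipate is the bookkeeping in the covering/packing argument: ensuring the overlap of the balls $B(x,\b)$ is bounded independently of $r$ and $\rho$ (which follows from the separation of the $B(x,\b/3)$ by a standard volume-counting argument), and carefully verifying the ``$4$-times'' hypothesis of Lemma~\ref{zzy} in each regime — in particular handling points $z\in\partial U$ that lie in several balls $B(x',\b)$, and confirming that $|y-x|\le r$ together with $3r<\b$ really does give the required comparability $|y-z'|\le 4|y-z|$ with $z'$ the center nearest to $z$. The key quantitative input making everything uniform in $r$ is precisely $3r<\b$, i.e. (\ref{rnan}), so that $y\in\ov B(x,r)$ is ``deep inside'' $B(x,\b)$; the rest is dimension-dependent constant-chasing that can be absorbed into $C(d)$.
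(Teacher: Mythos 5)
Your proposal is correct and follows essentially the same route as the paper: both inequalities rest on Lemma~\ref{zzy} applied after grouping $\partial U$ into pieces of surface measure comparable to $\b^{d-1}$ around the points of $X_r$, with the same triangle-inequality verifications of the factor-$4$ hypothesis (using $\b<\rho\le|y-z|$ for $y\in\partial U'$ in one direction, and $|y-x_j|\ge\b/3$ from the disjointness of the $B(\cdot,\b/3)$ in the other). The only cosmetic difference is that the paper builds an explicit partition $S_j$ with $\partial U\cap B(x_j,\b/3)\subset S_j\subset\partial U\cap B(x_j,\b)$, whereas you sum over the covering by the $B(x,\b)$ and invoke bounded overlap, which indeed follows from the disjointness of the $B(x,\b/3)$ by volume counting.
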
 

\begin{proof} 
Let us introduce a  partition of $\partial U$ corresponding to $X_r=\{x_1,\dots, x_M\}$. 
For $1\le j\le M$, let $S_j':=\partial U\cap B(x_j,\b/3)$,  $S_j'':=\partial U\cap B(x_j,\b)$, 
and let $S'$ be the union of the pairwise disjoint sets $S_1',\dots,S_M'$.
We recursively define $S_1,S_2, \dots, S_M$ by
$            S_1:=S_1'\cup (S_1''\setminus S')$ and 
$$
            S_j:=\bigl(S_j'\cup (S_j''\setminus S')\bigr)\setminus (S_1\cup \dots \cup S_{j-1}).
$$ 
 Since  $S_1'',\dots,S_M''$ cover $\partial U$, the sets $S_1,\dots, S_M$ form  a partition of $\partial U$  such that 
$$
 S_j'\subset S_j\subset S_j'' \qquad \mbox{ for every }1\le j\le M.  
$$
So there exists a constant $c_2=c_2(d)>0$ such that  
\begin{equation}\label{nm1}
c_2\inv \b^{d-1}\le \sigma(S_j) \le c_2\b^{d-1}, \qquad 1\le j\le M. 
\end{equation}

To prove  the first inequality, we fix $y\in \partial U'$. 
Let $1\le j\le M$. For every $z\in S_j$, 
$|y-z|\ge \rho> \b >|z-x_j|$,  and hence 
 $|y-x_j|\le |y-z|+|z-x_j|< 2|y-z|$. So, by Lemma~\ref{zzy},   $G(y, \cdot)\le  c_1 G(y,x_j)$ on~$S_j$, and  hence 
$$
G(1_{S_j}\sigma)(y)=\int_{S_j} G(y,z)\,d\sigma(z) \le c_1 \sigma(S_j) G(y,x_j) \le c_1c_2\b^{d-1} G(y,x_j).
$$
Taking the sum we see that $G\sigma(y)\le c_1c_2 G\mu(y)$. 

To prove the second inequality let $x:=x_{j_0}$, $1\le j_0\le M$, and assume that  $1\le j\le M$, $j\ne j_0$. 
Moreover,  let $y\in \ov B(x,r)$ and $z'\in S_j$.
Clearly, $y\in  B(x,\b/3)$, by (\ref{rnan}). Since $B(x,\b/3)\cap
B(x_j,\b/3)=\emptyset$, we see that $|y-x_j|>\b/3$, whereas $|x_j-z'|<\b$. So $|y-z'|\le
|y-x_j|+|x_j-z'|< 4 |y-x_j|$, and therefore $G(y,x_j)\le c_1 G(y,\cdot)$ on $S_j$, by
Lemma~\ref{zzy}. By integration, 
$\sigma(S_j)G(y,x_j) \le c_1G(1_{S_j}\sigma) (y)$.  Thus, using (\ref{nm1}), 
\begin{eqnarray*} 
 G\mu& \le&  \b^{d-1}G(\cdot,x) + c_2 \sum\nolimits_{j\ne j_0} \sigma(S_j)  G(\cdot,x_j)\\
         &\le& \b^{d-1}G(\cdot,x) + c_1 c_2\sum\nolimits_{j\ne j_0} G(1_{S_j}\sigma) \\
         &\le& \b^{d-1}G(\cdot,x)  + c_1 c_2 G\sigma \quad  \on  \ov B(x,r).
\end{eqnarray*} 
Taking $C:=c_1c_2$ the proof is finished.
\end{proof}

By (\ref{gs-formula}), there exists a constant $c_3=c_3(d)>0$ such that 
\begin{equation}\label{Gsigma}
c_3\inv \rho \le G\sigma(y)\le  {c_3}\rho\qquad\mbox{ whenever } y\in V \mbox{ such that } |\, |y|-R\,|\le \rho.
\end{equation} 
After these preparations we are  ready to prove the crucial estimate in  Proposition~\ref{heart}. 
We first claim that
\begin{equation}\label{essential}
              G\mu\le (2+c_3 C)\rho    \on \partial E_r.
\end{equation} 
Indeed, let $x\in X_r$ and $y\in \partial B(x,r)$.  
Since $B(0,1)\subset B(x,2)$ and  $|y-x|=r<1/2$, we obtain that $G(y,x)\le N(|y-x|)-N(2)=N(r)-N(2)\le 2 N(r)$
(if~$d=2$, then $N(r)-N(2)=\log (1/r)+\log 2\le 2 \log (1/r)$).
So, by (\ref{ar}),
\begin{equation*} 
                        \b^{d-1}G(x,y)\le 2\b^{d-1}N(r) =2\b^{d-1}\vp(r)\inv=2  \rho. 
\end{equation*} 
Further, by (\ref{Gsigma}),  $G\sigma(y)\le c_3 \rho$.  Therefore
(\ref{essential}) holds, by Lemma \ref{Gsigmamu}.

Since $G\mu$ is harmonic on $V\setminus {E_r}$ and $G\mu$ vanishes at $\partial V$,
we conclude that
\begin{equation*} 
         H_{V\setminus {E_r} }1_{E_r}\ge   (2+c_3C)\inv \rho\inv G\mu  \on V\setminus {E_r}.
\end{equation*} 
On the other hand,  by Lemma \ref{Gsigmamu} and (\ref{Gsigma}),
\begin{equation*} 
 G\mu\ge C\inv  G\sigma  \ge  (c_3C) \inv \rho 
 \on \partial U', 
\end{equation*} 
whence on $\ov {U'}$, by the minimum principle. Taking $\kappa:=(c_3C(2+c_3C))\inv $ we thus obtain that
$  H_{V\setminus {E_r} }1_{E_r}\ge \kappa$ on $\ov{U'} $.

\section{Main result for the open unit ball}

To prove Theorem \ref{dream-unit},  let $\delta>0$ and $h\colon (0,1)\to (0,1)$ be such that $\lim_{t\to 0}h(t)=0$.
It will be convenient to introduce the smallest increasing majorant $\tilde h$ of $h$, given by
$$
                      \tilde h(t):=\sup\{h(s)\colon 0<s\le t\}
$$ 
(of course, we also have $\lim_{t\to 0} \tilde h(t)=0$).  

Further, let $(R_k)$ be a sequence in $(1/2,1)$ which is strictly increasing 
to $1$, and let $R_0:=1/2$. For every $k\in \nat$, let
$$
         U_k:=B(0,R_k) \und V_k:=B(0, (R_k+R_{k+1})/2).  
$$

To apply our construction in Section 3 let us, for the moment, fix $k\in\nat$ and let $R:=R_k$, 
$\rho:= (R_{k+1}-R_k)/2$, and $\rho_0\le \rho/3$ such that (\ref{rnan}) holds.  We observe that $U=U_k$, $U'=V_k$, and $V=U_{k+1}$.
There exists $\eta>0$ such that 
\begin{equation}\label{h-eta}
c\rho\inv\tilde h(\eta)<  2^{-k}\delta.
\end{equation} 
We fix $0<r<\min\{\eta, \rho_0,(R_k-R_{k-1})/2\}$
and take
$$
               X_k:=X_r \und      r_k:=r.
$$
By (\ref{log-estimate}) and (\ref{h-eta}), 
\begin{equation}\label{sum-k}
          \# X_k\cdot \vp(r_k) \tilde h(r_k) \le c_{} \rho\inv \tilde h(\eta)<2^{-k}\delta.
\end{equation} 
By Proposition \ref{heart},  the union $E_k$ of all $\ov B(x,r_k)$, $x\in X_k$, satisfies
$      H_{U_{k+1}\setminus E_k} 1_{E_k}\ge \kappa $ on~$\ov V_k$, and hence, by the minimum principle,
\begin{equation}\label{HV-k}
      H_{V_{k+1}\setminus E_k} 1_{E_k}\ge \kappa \on \ov V_k.
\end{equation} 

By our choice of $r_k$,    the balls $\ov B(x,r_k)$, $x\in X_k$, $k\in\nat$, 
are pairwise disjoint and $r_k< (R_{k+1}-R_k)/6< (1-|x|)/6$, for every $x\in X_k$.  
Let $A$ be the union  of all~$E_k$, $k\in\nat$. Then  $B(0,1)\setminus A$ is a champagne subdomain. 
By (\ref{HV-k}) and Proposition~\ref{nlogn},  $A$~is unavoidable.
Finally,
$$
  \sum\nolimits_{x\in X_k, k\in\nat}  \vp(r_x) h(r_x) <\delta,
$$
 by  the definition of $\tilde h$ and (\ref{sum-k}), finishing the proof of Theorem \ref{dream-unit}.

For an application in Section \ref{final} let us note the following.

\begin{corollary}\label{y-ball}
Let $y\in \reald$, $0<a'< a\le 1$, $\g\in (0,1)$,  and \hbox{$\delta_y>0$}. Then  there exist a finite set $X_y$
 in  $B(y,a)\setminus \ov B(y,a')$ and  $0<s_x< (a-|x-y|)/6$, $x\in X_y$, such that the  balls $\ov B(x,s_x)$, $x\in X_y$,
are pairwise disjoint,
\begin{equation}\label{y-ball-fill} 
                 \sum\nolimits_{x\in X_y} \vp(s_x) h(s_x) <\delta_y
\und
                  H_{B(y,a)\setminus A_y} 1_{A_y} \ge \g \on \ov B(y,a'),
\end{equation} 
where $A_y$ is the union of all $\ov B(x,s_x)$, $x\in X_y$.
\end{corollary}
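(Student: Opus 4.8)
The statement is essentially a rescaled version of Theorem~\ref{dream-unit}, restricted to a single annular shell and transplanted by translation and dilation; the only new feature is that we want the hitting probability bounded below by a prescribed $\g\in(0,1)$ rather than by the absolute constant $\kappa$, and that we have only one shell available. Since the constant $\kappa$ from Proposition~\ref{heart} may be smaller than $\g$, the plan is to \emph{stack several concentric shells inside $B(y,a)\setminus\ov B(y,a')$} and use the boosting mechanism of Proposition~\ref{nlogn}: with $m$ shells, each contributing a hit with probability at least $\kappa$, a Brownian particle starting in $\ov B(y,a')$ fails to hit the union of all the bubbles before leaving $B(y,a)$ with probability at most $(1-\kappa)^m$, so it suffices to pick $m$ with $(1-\kappa)^m<1-\g$.

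\textbf{Steps.} First, by translating we may assume $y=0$, and by the scaling invariance of Brownian motion (harmonic measure is invariant under similarities) it suffices to treat the annulus $B(0,a)\setminus\ov B(0,a')$; note, however, that $\vp$ and $h$ do \emph{not} scale, so we keep the original radii and simply verify the smallness of $\sum\vp(s_x)h(s_x)$ at the end. Second, fix $m\in\nat$ with $(1-\kappa)^m<1-\g$, choose radii $a'=\rho_0'<\rho_1'<\dots<\rho_m'=a$ equally spaced, say, and for each $k\in\{1,\dots,m\}$ apply the construction of Section~\ref{choice} with the roles $R:=\rho_{k-1}'$ (after rescaling to the unit ball this is a number in $(1/2,1)$, or one simply reworks Section~\ref{choice} for a ball $B(0,R)$ of arbitrary radius --- the estimates are purely local and dilation-covariant in the relevant way), $\rho$ chosen so that $B(0,R+2\rho)\subset B(0,\rho_k')$, obtaining a finite set $X_k\subset\partial B(0,\rho_{k-1}')$, a radius $r_k$ that may be taken \emph{as small as we please} (Proposition~\ref{heart} holds for \emph{every} $r\in(0,\rho_{0,k})$), and the union $E_k$ of the bubbles $\ov B(x,r_k)$, $x\in X_k$, with $H_{B(0,\rho_k')\setminus E_k}1_{E_k}\ge\kappa$ on $\ov B(0,\rho_{k-1}'+\rho_k')$-type shells --- in any case on $\overline{B(0,\rho_{k-1}')}$ and beyond. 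Third, apply Proposition~\ref{nlogn} with the bounded exhausting sets $V_k':=B(0,\rho_k')$ (finitely many of them, $k=1,\dots,m$) and $\kappa_k:=\kappa$: inequality (\ref{HUE}) is exactly the estimate just obtained, hence (\ref{criterion}) gives $H_{B(0,a)\setminus A_0}1_{A_0}\ge 1-(1-\kappa)^m>\g$ on $\ov B(0,a')=\ov{V_1'}$-or-smaller, where $A_0=\bigcup_{k=1}^m E_k$. Fourth, and this is where the free choice of $r_k$ is spent: using (\ref{log-estimate}), $\#X_k\cdot\vp(r_k)\le c\rho_k\inv$ with $\rho_k$ fixed, while $h(r_k)\to 0$ as $r_k\to 0$ by hypothesis; so shrink each $r_k$ (and correspondingly the bubbles, keeping pairwise disjointness across all $m$ shells, which is automatic once the $r_k$ are small relative to the gaps $\rho_k'-\rho_{k-1}'$) until $\sum_{k=1}^m\#X_k\,\vp(r_k)h(r_k)<\delta_y$. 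Finally, relabel the centers of all bubbles as $X_y$ and their (common within a shell) radii as $s_x:=r_k$; the condition $s_x<(a-|x-y|)/6$ is arranged by choosing the $r_k$ small enough at the outset (since $|x-y|=\rho_{k-1}'<a$, this is a positive quantity bounded below uniformly in $x$), and pairwise disjointness of the $\ov B(x,s_x)$ holds within each shell by (\ref{rnan})/(ii) and across shells by the spacing of the $\rho_k'$.

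\textbf{Main obstacle.} The only genuine subtlety is that Section~\ref{choice} and Proposition~\ref{heart} are stated for $U=B(0,R)$ with $R\in(1/2,1)$ and $\rho\in(0,1/3)$, whereas here the concentric spheres $\partial B(0,\rho_{k-1}')$ have arbitrary radii. This is not a real difficulty: either one rescales the whole annulus $B(0,a)$ to the unit ball at the very beginning (legitimate because unavoidability, being a harmonic-measure statement, is a conformal/similarity invariant, and the sum $\sum\vp(s_x)h(s_x)$ is only required to be $<\delta_y$, so one works with the rescaled radii and then checks the bound --- being careful that $\vp$ is not scale-invariant, which is harmless since $r_k$ is a free parameter that we drive to $0$ anyway), or one observes that every estimate in Sections~\ref{choice} and~\ref{section-crucial} is a consequence of the two-sided bound $c_0\inv F\le G\le c_0 F$ for the Green function of a ball (cited from \cite{gardiner-armitage}) together with elementary area counts, all of which hold for balls of any radius after rescaling the geometric parameters. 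A careful but routine bookkeeping of constants is all that is needed; no new idea enters.
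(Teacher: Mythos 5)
Your proposal is correct and follows essentially the same route as the paper: finitely many concentric shells built as in Section~\ref{choice}, Proposition~\ref{nlogn} to boost the constant $\kappa$ of Proposition~\ref{heart} to the prescribed $\g$, and the freedom in the choice of $r$ to make $\sum\vp(s_x)h(s_x)$ small. The paper's own proof simply rescales a finite block $n\le k<m$ of the shells already constructed for Theorem~\ref{dream-unit} by the factor $a\le 1$, choosing $R_n>a'/a$ so that these shells land in the annulus, and uses the monotonicity of $\vp$ and $\tilde h$ to transfer the sum bound; this places all shells in the outer part of the annulus and thereby dissolves your ``main obstacle'' concerning $R\in(1/2,1)$.
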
 

\begin{proof} By translation invariance, we may assume that $y=0$. Let $\tau:=a'/a$. 
 By~(\ref{sum-k}), there exist $m,n\in \nat$, $m>n$,   such that $R_n> \tau$,   
$$
               \sum\nolimits_{n\le k<m } \# X_k\,\vp(r_k) \tilde h(r_k) <\delta_0, \und
                      (1-\kappa)^{(m-n)} \le 1-\g. 
$$

Let $X_0$ be the set of all $ax$,  $x\in X_k$, $n\le k<m$, and, for $x\in X_0$, let $s_x:=a r_{x/a}$. 
Since $\vp$ and $\tilde h$ are increasing, we obtain that 
$$
      \sum\nolimits_{x\in X_0}  \vp(s_x)  h(s_x) \le  \sum\nolimits_{n\le k<m}  \# X_k\,\vp(r_k) \tilde h(r_k) <\delta_0.
$$
By scaling invariance of harmonic measures, by (\ref{HV-k}) and Proposition~\ref{nlogn},   
the second inequality in (\ref{y-ball-fill}) follows
(note that  (\ref{HUE}) trivially holds for $j\ge m$ with $\kappa_j:=0$ and $E:=\emptyset$).    
\end{proof}

\section{Proof for arbitrary connected open sets}\label{final}

Let $U$ be an arbitrary non-empty connected open set in $\reald$, $d\ge 2$.
Let us fix  bounded open sets $V_n\ne\emptyset$, $n\in\nat$, such that $\ov V_n\subset V_{n+1} $
and $V_n\uparrow U$. For every $n\in\nat$, we define 
$$
              d_n:=\min\left\{\dist(\partial V_n, \partial V_{n-1}\cup \partial V_{n+1}), \, 1/n\right\}
$$
(take $V_0:=\emptyset$) and choose a finite subset $Y_n$ of $\partial V_n$ such that the balls $B(y,d_n/2)$, 
$y\in Y_n$, cover $\partial V_n$ and the balls $B(y,d_n/6)$, $y\in Y_n$, are pairwise disjoint. 
For $y\in Y_n$, we apply Corollary \ref{y-ball} with 
\begin{equation}\label{deltan}
a':=\frac {d_n}7, \qquad a:=\frac{d_n}6, \qquad \g:=\frac 12, \qquad \delta_y:=\frac{\delta}{\# Y_n\cdot 2^n}\, .
\end{equation} 

Let $X$ be the union of all $X_y$, $y\in Y_n$, $n\in\nat$, and let $A$ be the union of all $\ov B(x,s_x)$, $x\in X$.
For all $x\in X$,
$s_x<a/6=d_n/18<\dist(x,U^c)/18$ and, if $U$ is unbounded,  $s_x\to 0$ if $x\to\infty$. Of course, $X$ is locally finite in $U$.
Hence, $U\setminus A$ is a~champagne subdomain.  Moreover,  by (\ref{y-ball-fill}) and (\ref{deltan}),
$$
    \sum\nolimits_{x\in X} \vp(s_x) h(s_x ) <\sum\nolimits_{n\in\nat} \sum\nolimits_{y\in Y_n}\delta_y=\delta.
$$

So it remains only to prove $A$ is unavoidable.   To that end we define 
\begin{equation*} 
      \eta:= \inf \bigl\{H_{B(0,1)\setminus \ov B(0,1/7)} 1_{\ov B(0,1/7)} (z)\colon |z|< 1/2\bigr\}
\end{equation*} 
so that Brownian motion starting in $B(0,1/2)$  hits $\ov B(0,1/7)$ with probability at least $\eta$ before leaving $B(0,1)$.
(Of course $\eta$ is easily determined: It is $\log 2/\log 7$, if $d=2$, and
$(2^{d-2}-1)/(7^{d-2}-1 )$, if $d\ge 3$.)
Let us fix $n\in\nat$, $y\in Y_n$, and let $E$ be the union of all $\ov B(x,s_x)$, $x\in X_y$.  
We claim that 
\begin{equation}\label{eta-est}
             H_{V_{n+1}\setminus E} 1_E \ge \eta/2 \on B(y,d_n/2),
\end{equation} 
that is, Brownian motion starting in $B(y,d_n/2)$  hits $E$ with probability at least~$\eta/2$ before leaving $V_{n+1}$.
Since the balls $B(y,d_n/2)$, $y\in Y_n$, cover $\partial V_n$, then Proposition~\ref{nlogn} (this time with $\kappa_n:=\eta/2$) 
will show that $A$ is unavoidable.

To prove the claim let 
$$
B:=B(y,d_n),\qquad D:=B(y, d_n/6), \qquad F:=\ov B(y,d_n/7).
$$
In probabilistic terms we may argue as follows. Starting in $B(y,d_n/2)$, Brownian motion hits $F$ with 
probability at least $\eta$ before leaving $B\subset V_{n+1}$. And, continuing from a point in $F$,
it hits $E$ with probability at least $1/2$ before leaving $D$, by Corollary \ref{y-ball}. So Brownian motion starting
in $B(y,d_n/2)$  hits $E$ with probability at least $\eta/2$ before leaving $V_{n+1}$.

For an analytic proof, we first observe that, by translation and scaling invariance of harmonic measures,
$ H_{B\setminus F} 1_F\ge \eta$ on $B(y,d_n/2)$. By the minimum principle,
$$
H_{V_{n+1}\setminus E}1_E\ge H_{B\setminus E} 1_E\ge  H_{D\setminus E} 1_E,
$$
where $H_{D\setminus E}1_E\ge 1/2$ on $F$, by Corollary \ref{y-ball}, and hence
$$
H_{B\setminus E} 1_E=H_{B\setminus (E\cup F)} H_{B\setminus E} 1_E
\ge  (1/2) H_{B\setminus (E\cup F)} 1_{E\cup F}\ge  (1/2)  H_{B\setminus F} 1_F.
$$
Thus   (\ref{eta-est}) holds and our proof is finished.

\bibliographystyle{plain} 

\def\cprime{$'$} \def\cprime{$'$}

{\small \noindent 
Wolfhard Hansen,
Fakult\"at f\"ur Mathematik,
Universit\"at Bielefeld,
33501 Bielefeld, Germany, e-mail:
 hansen$@$math.uni-bielefeld.de}\\
{\small \noindent Ivan Netuka,
Charles University,
Faculty of Mathematics and Physics,
Mathematical Institute,
 Sokolovsk\'a 83,
 186 75 Praha 8, Czech Republic, email:
netuka@karlin.mff.cuni.cz}

\end{document}